\newtheorem{theorem}{Theorem}[section]
\newtheorem{lemma}[theorem]{Lemma}
\newtheorem{corollary}[theorem]{Corollary}
\theoremstyle{definition}
\newtheorem{definition}[theorem]{Definition}
\theoremstyle{remark}
\newtheorem{remark}[theorem]{Remark}
\newtheorem{question}[theorem]{Question}
\begin{document}
\title[Cantor Sets with Complement having the same {\large$\pi_{1}$}]
{Inequivalent Cantor Sets in $R^{3}$ Whose Complements Have the Same Fundamental Group}
\author{Dennis J. Garity}
\address{Mathematics Department, Oregon State University,
Corvallis, OR 97331, U.S.A.}
\email{garity@math.oregonstate.edu}

\author{Du\v{s}an Repov\v{s}}
\address{Faculty of Mathematics and Physics, and Faculty of Education, University of Ljubljana, P.O.Box 2964,
Ljubljana, Slovenia 1001}
\email{dusan.repovs@guest.arnes.si}
\date{November 1, 2011}
\subjclass[2010]{Primary 57M30, 57M05; Secondary 57N10, 54E45}
\keywords{Cantor set, rigidity, local genus, defining sequence, end, open $3$-manifold, fundamental group}

\begin{abstract}
For each  Cantor set $C$ in $R^{3}$,  all points of which have bounded local genus, we show that there are infinitely many inequivalent Cantor sets in $R^{3}$  with complement having the same fundamental group as the complement of $C$. This answers a question from 
\emph{Open Problems in Topology} and has as an application a simple construction of nonhomeomorphic open $3$-manifolds with the same fundamental group. The main techniques used are analysis of local genus of points of Cantor sets, a construction for producing rigid Cantor sets with simply connected complement, and manifold decomposition theory. The results presented  give an argument that for  certain groups $G$, there are uncountably many  nonhomeomorphic open $3$-manifolds with fundamental group $G$.
\end{abstract}

\maketitle

\section{Introduction}

The following question was asked in \emph{Open Problems in Topology II}
(see Question 14 in \cite{GaRe07}):
\begin{question}\label{Q1Open}
Can two different (rigid) Cantor sets have complements with  the same
fundamental group?
\end{question}

There are two parts to the question. First, are there any Cantor sets satisfying the condition? Second, are there rigid such Cantor sets. Answering for rigid Cantor sets seems more difficult. We focus our attention on answering these questions in $R^{3}$. 
The same techniques apply to embeddings in $S^{3}$. 
Here, \emph{different} Cantor sets means Cantor sets that are inequivalently embedded in the ambient space. The following theorem from \cite{GaReZe06}
together with results on local genus, $g_{x}(X)$,  of points $x$ in a Cantor set $X$
(see Section \ref{GenusSection}) give a positive answer to the above question when 
the fundamental group of the complement is trivial.

\begin{theorem}\cite{GaReZe06}
\label{rigidtheorem} 
For each increasing sequence $S=(n_1, n_2, \ldots)$
of integers such that $n_1>2$, there exists a wild Cantor set,
$X=C(S)$, in
$ R^3$, and a countable dense subset
$A=\{a_1, a_2, \ldots\} \subset X$ such that the following
conditions hold:

\begin{enumerate}
\item $g_x(X) \le 2$ for every $x\in X\setminus A$,
\item $g_{a_i}(X)=n_i$ for every $a_i\in A$, and
\item $R^3\setminus X$ is simply connected.
\end{enumerate}
\end{theorem}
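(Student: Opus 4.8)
The plan is to produce $X = C(S)$ as the intersection $\bigcap_i M_i$ of a nested defining sequence of compact $3$-manifolds, each $M_i$ a disjoint union of handlebodies with $M_{i+1}\subset \mathrm{int}\, M_i$ and mesh tending to zero. I would build the sequence by iterating a fixed family of \emph{insertion patterns}: inside a given handlebody $H$ one places finitely many smaller handlebodies whose union is wildly embedded in $H$, yet is arranged so that every meridian of $H$ bounds a disk in the complement of the next stage. The models here are Antoine-type chains modified in the Bing--Whitehead / DeGryse--Osborne manner, so that each loop linking a component can be shrunk once one passes to the finer stage. This is precisely the device that yields wildness (the embedding cannot be straightened, because the local genus is nonzero) together with a simply connected complement.

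For the genus bookkeeping I would use two kinds of patterns. A generic pattern inserts handlebodies of genus $\le 2$ and is used at all but countably many components, which gives the upper bound $g_x(X)\le 2$ for $x\in X\setminus A$ by simply reading the genus off the exhibited defining sequence. To realize the prescribed genera I would enumerate a countable dense set $A=\{a_1,a_2,\dots\}$ of the eventual Cantor set and, at the component tracking $a_i$, insert a single handlebody of genus exactly $n_i$ with the remaining insertions of genus $\le 2$. The upper bound $g_{a_i}(X)\le n_i$ again follows from this sequence; the delicate half is the lower bound $g_{a_i}(X)\ge n_i$, since $g_{a_i}$ is an infimum over \emph{all} defining sequences. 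Here I would invoke the genus machinery for defining sequences (as developed by \v{Z}eljko), showing that no alternative defining sequence can present the nested neighborhoods of $a_i$ with genus below $n_i$, so the inserted genus $n_i$ handlebody cannot be simplified.

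To verify condition (3) I would use that $R^3\setminus X=\bigcup_i (R^3\setminus M_i)$ is an increasing union, so that $\pi_1(R^3\setminus X)=\varinjlim \pi_1(R^3\setminus M_i)$ and any loop lies in some $R^3\setminus M_i$ by compactness. Since $\pi_1(R^3\setminus M_i)$ is generated by the meridians of the handlebody components of $M_i$, it suffices to arrange the insertion patterns so that each such meridian is null-homotopic in $R^3\setminus M_{i+1}$. This is exactly the defining property of the chosen patterns: every generator dies at the next stage, which forces the direct limit, and hence $\pi_1(R^3\setminus X)$, to be trivial.

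I expect the main obstacle to be reconciling the three requirements at once. Wildness and high local genus push toward complicated linking within each pattern, whereas simple connectivity demands that every meridian be killable at the following stage; designing a single family of insertion patterns that is wild, carries the exact genus $n_i$ at the marked components, and still lets all meridians bound disks downstream is the core of the construction. The genus lower bound is the other sensitive point, since it requires ruling out every competing defining sequence rather than exhibiting a single favorable one.
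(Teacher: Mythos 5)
You should first note that this paper contains no proof of Theorem \ref{rigidtheorem}: the statement is imported verbatim from \cite{GaReZe06} (with Remark \ref{RididVariation} observing that the construction there works for any sequence with $n_i\geq 3$), so the only comparison available is with the construction in that reference. At the level of strategy, your outline is faithful to it: a defining sequence of disjoint unions of handlebodies, insertion patterns in the DeGryse--Osborne spirit arranged so that every meridian of a stage-$i$ component bounds a disk in the complement of stage $i+1$ (which gives $\pi_1(R^3\setminus X)=\varinjlim\pi_1(R^3\setminus M_i)=1$, correctly, since each $\pi_1(R^3\setminus M_i)$ is normally generated by meridians), genus $\le 2$ pieces generically, genus $n_i$ pieces along the nest converging to $a_i$, and \v{Z}eljko's local genus theory for the lower bounds.

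However, as a proof the proposal has genuine gaps, and they are exactly the two points you flag but do not resolve. First, no insertion pattern is actually constructed: ``Antoine-type chains modified in the Bing--Whitehead / DeGryse--Osborne manner'' is a wish list of properties (wild, prescribed genus, all meridians killable downstream), and producing a single pattern satisfying all three simultaneously is the core content of \cite{GaReZe06}, not a detail. Second, the lower bound $g_{a_i}(X)\ge n_i$ cannot be obtained by ``invoking the genus machinery'': the definition makes upper bounds easy (exhibit one defining sequence) but a lower bound requires a geometric lemma tied to the specific pattern, roughly that any compact $3$-manifold neighborhood nested between two stages and containing the relevant sub-Cantor set must have a component of genus at least $n_i$; this is proved by analyzing how meridional disks of the outer handlebody must meet the linked inner handlebodies, and without it nothing rules out a cleverer defining sequence presenting $a_i$ with smaller genus. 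A third, smaller gap is the bookkeeping for conditions (1) and (2): inserting ``a single handlebody of genus exactly $n_i$'' at one stage proves nothing, since the genus-$n_i$ components must persist at \emph{all} later stages around $a_i$ (cf.\ Lemma \ref{localgenuslemma}), while simultaneously $A$ must be dense and every point of $X\setminus A$ must still admit a defining sequence witnessing genus $\le 2$; because $A$ is dense, every component at every stage contains points of $A$, so the schedule by which new marked points are activated has to be controlled (e.g., so that unmarked points lie in low-genus components at infinitely many stages, and one passes to a subsequence) --- this interaction is not addressed in your sketch.
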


\begin{remark}\label{RididVariation} 
For later reference, we note that the construction in \cite{GaReZe06}
actually works for any sequence $S=(n_1, n_2, \ldots)$ of integers
where each $n_{i}\geq 3$. The condition that the sequence is increasing
was used in this earlier paper to prove rigidity of the Cantor sets.
\end{remark}

Since local genus is preserved by equivalence, any Cantor sets 
$X_{1}=C(S_{1})$ and $X_{2}=C(S_{2})$ corresponding to distinct increasing sequences 
$S_{1}$ and $S_{2}$ as above are inequivalent. Since there are uncountably many such sequences, there are uncountably many inequivalent Cantor sets with simply connected complement. Since all of these Cantor sets are rigid, this answers both of the 
above questions in the case when the complement is simply connected. 

The more interesting case is when a Cantor set $C$ in $R^{3}$ has nonsimply connected complement. 
Question \ref{Q1Open} asks whether there is an inequivalent Cantor set $D$ with complements having the same fundamental group. We answer this question affirmatively for Cantor sets of bounded local genus and for a large class of other Cantor sets. 
The essential ingredient is the class of Cantor sets provided by 
Theorem \ref{rigidtheorem} with genus of points taking values in carefully chosen sequences of positive integers and with simply connected complement.

\begin{theorem}[Main Theorem]
\label{maintheorem} 
Let $C$ be a Cantor set in $R^{3}$. Suppose there is some integer  $N\geq 3$ such that there are only finitely many points in $C$ of local genus $N$. Then there are uncountably many inequivalent Cantor sets $C_{\alpha}$ in $R^{3}$ with complement having the same fundamental group as 
the complement of $C$.
\end{theorem}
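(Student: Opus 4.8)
The plan is to build the family $\{C_\alpha\}$ by adjoining to $C$, inside a tame round ball $B$ chosen disjoint from $C$, a scaled copy $D_\alpha$ of one of the Cantor sets $C(S_\alpha)$ produced by Theorem \ref{rigidtheorem}, where $S_\alpha$ ranges over an uncountable family of sequences. I would use Remark \ref{RididVariation} to allow the terms to be arbitrary integers $n_i\ge 3$ (in particular repeating the value $N$), and I would detect inequivalence through local genus rather than through rigidity of the inserted piece, so that dropping the increasing condition costs nothing. The union $C_\alpha = C\sqcup D_\alpha$ is compact, perfect, totally disconnected and nonempty, hence again a Cantor set in $R^{3}$, and its genus structure is the ``disjoint superposition'' of those of $C$ and $D_\alpha$ because the two pieces sit in disjoint balls.

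To see that each $C_\alpha$ has complement with the same fundamental group as $R^{3}\setminus C$, I would apply van Kampen several times. Since $R^{3}\setminus C(S_\alpha)$ is simply connected by condition (3) of Theorem \ref{rigidtheorem}, splitting $R^{3}\setminus D_\alpha$ along a sphere surrounding $B$ (the complementary region being a ball exterior, hence simply connected, and the overlap a spherical shell) shows that $B^{\circ}\setminus D_\alpha$ is itself simply connected. Writing $R^{3}\setminus C_\alpha$ as the union of a slightly enlarged $B^{\circ}\setminus D_\alpha$ and $R^{3}\setminus(C\cup\overline{B})$, whose overlap is again a simply connected spherical shell, van Kampen gives $\pi_1\bigl(R^{3}\setminus C_\alpha\bigr)\cong \pi_1\bigl(R^{3}\setminus(C\cup\overline{B})\bigr)$; a final application restoring the tame ball $\overline{B}$ (disjoint from $C$) identifies this with $\pi_1\bigl(R^{3}\setminus C\bigr)$. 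The disjointness of $B$ from $C$ together with the simple connectivity of the inserted piece are precisely what make $D_\alpha$ invisible to $\pi_1$.

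The inequivalence is where the hypothesis on $N$ enters. Local genus is preserved by equivalence and is a local invariant, so $g_x(C_\alpha)=g_x(C)$ for $x\in C$ and $g_x(C_\alpha)=g_x(D_\alpha)$ for $x\in D_\alpha$; in particular the set of genus-$N$ points of $C_\alpha$ is $F\sqcup G_\alpha$, where $F$ is the (finite, by hypothesis) set of genus-$N$ points of $C$ and $G_\alpha$ is the set of genus-$N$ points of $D_\alpha$. Any equivalence $C_\alpha\cong C_\beta$ restricts to a homeomorphism of these genus-$N$ sets, so to separate uncountably many $C_\alpha$ it suffices to arrange the pieces $D_\alpha$ so that the spaces $F\sqcup G_\alpha$ fall into uncountably many homeomorphism types. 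Here the finiteness of $F$ is essential: it guarantees that the fixed contribution of $C$ at level $N$ cannot absorb the distinctions carried by $G_\alpha$, whereas at a level where $C$ has infinitely many points the contribution of $C$ would swamp anything we insert.

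I expect the main obstacle to be exactly this realization step. A naive count of genus-$N$ points takes only countably many values and therefore cannot by itself separate uncountably many Cantor sets, so the delicate part is to use the freedom in the construction of $C(S)$ (Remark \ref{RididVariation}) to install, at the single ``clean'' level $N$, uncountably many genuinely different genus-$N$ configurations $G_\alpha$ — distinguished, for instance, by the homeomorphism type (accumulation structure) of $G_\alpha$ as a countable subspace — in such a way that the distinction survives the adjunction of the finite set $F$. Verifying that the rigid construction permits genus-$N$ points to accumulate on one another in prescribed patterns, rather than only on points of genus $\le 2$, is the technical heart of the argument; once that flexibility is established, letting $\alpha$ range over the resulting uncountable set of configurations yields uncountably many pairwise inequivalent $C_\alpha$, all with the same fundamental group of the complement as $C$, which answers Question \ref{Q1Open} in this generality.
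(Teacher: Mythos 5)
Your construction and your fundamental group computation are sound: placing a copy $D_\alpha$ of $C(S_\alpha)$ in a tame ball disjoint from $C$ and running van Kampen as you describe does give $\pi_1(R^3\setminus C_\alpha)\cong\pi_1(R^3\setminus C)$; this is exactly the splittable-union observation made in Section \ref{DefSection} of the paper. (The paper instead wedges $X_S$ to $C$ at a point, via Theorem \ref{pionetheorem}, precisely so that its examples are \emph{not} splittable, but the theorem as stated does not require that.) The genuine gap is in your inequivalence step, and it is not a patchable technicality. You propose to separate the $C_\alpha$ by the intrinsic homeomorphism type (``accumulation structure'') of the genus-$N$ configuration $G_\alpha$, and you correctly flag the realization of uncountably many such configurations as the unproven ``technical heart.'' But that invariant is hopeless for the Cantor sets that Theorem \ref{rigidtheorem} and Remark \ref{RididVariation} actually supply: there the special points $A$ form a \emph{dense} subset of $X(S)$, so any genus-$N$ set $G_\alpha$ obtained by repeating the value $N$ is a countable, dense-in-itself metric space, hence homeomorphic to $\mathbb{Q}$ by Sierpi\'{n}ski's theorem; adjoining the finite set $F$ (whose points are isolated in the union) gives the same space $\mathbb{Q}\sqcup F$ for every $\alpha$. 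All your configurations are therefore mutually homeomorphic, the invariant distinguishes nothing, and prescribing other accumulation patterns at genus level $N$ is a control over the construction that the cited results do not provide.

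The fix is to \emph{not} drop the increasing sequences --- discarding them (``dropping the increasing condition costs nothing'') is exactly where your argument loses the theorem, because they are the separating mechanism. Follow the paper's choice: let $S_\alpha$ interleave the constant value $N$ on odd indices (so that, by density of $A_1$ and $A_2$, the genus-$N$ points are dense in $D_\alpha$) with a strictly increasing sequence $(m_i)$, $m_1\geq 3$, on even indices, and let $(m_i)$ range over uncountably many increasing sequences. Then density of the genus-$N$ points together with finiteness of $F$ forces any ambient homeomorphism taking $C_\alpha$ to $C_\beta$ to carry $D_\alpha$ onto $D_\beta$: genus-$N$ points map to genus-$N$ points, at most finitely many of which lie in $C$, and taking closures pins the inserted piece to the inserted piece. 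Once that is done, the contradiction comes not from level $N$ but from the rest of the genus spectrum: for distinct increasing sequences there is some value occurring as a local genus of a point of one of $D_\alpha$, $D_\beta$ but not of the other, which is impossible since local genus is an embedding invariant. This same mechanism also shows each $C_\alpha$ is inequivalent to $C$ itself ($C_\alpha$ has infinitely many genus-$N$ points, $C$ only finitely many), a point your write-up asserts implicitly but never verifies.
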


\begin{corollary}[Bounded genus]
\label{maincorollary} 
Let $C$ be a Cantor set in $R^{3}$ of bounded local genus, or more generally, a Cantor set where the local genus of points never takes on a specific integer value $N\geq 3$. Then there are uncountably many inequivalent Cantor sets $C_{\alpha}$ in $R^{3}$ with complement having the same fundamental group as 
the complement of $C$.
\end{corollary}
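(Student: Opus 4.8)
The plan is to deduce this corollary directly from the Main Theorem (Theorem~\ref{maintheorem}) by observing that, in each of the stated cases, there is an integer $N \geq 3$ for which $C$ has only finitely many points of local genus $N$ --- indeed, none at all --- so that the hypothesis of the Main Theorem is met and its conclusion can simply be quoted.

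I would treat the more general hypothesis first, since it subsumes the bounded case. Suppose the local genus of points of $C$ never equals some fixed integer $N \geq 3$. Then the set of points $x \in C$ with $g_x(C) = N$ is empty, hence certainly finite, so the hypothesis of Theorem~\ref{maintheorem} holds for this $N$. Applying the Main Theorem then yields the desired uncountable family $\{C_{\alpha}\}$ of pairwise inequivalent Cantor sets whose complements all share the fundamental group of $R^{3}\setminus C$.

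For the bounded case, suppose there is an integer $M$ with $g_x(C) \leq M$ for every $x \in C$. I would set $N = \max\{M+1,\,3\}$. Then $N \geq 3$ and $N > M$, so no point of $C$ has local genus $N$, which reduces the bounded case to the general one already handled. The single edge to keep track of is the requirement $N \geq 3$ inherited from Theorem~\ref{maintheorem}: when $M \leq 1$ the naive choice $N = M+1$ would be too small, and one must instead take $N = 3$, which is still unattained since $3 > M$.

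Since the entire difficulty of the argument resides in the Main Theorem --- the construction via Theorem~\ref{rigidtheorem} of rigid Cantor sets with prescribed genus sequences and simply connected complement, together with the decomposition-theoretic identification of the fundamental groups --- no real obstacle remains at the level of the corollary. The only thing to verify is that the hypotheses genuinely supply an unattained value $N \geq 3$, and the two cases above confirm exactly this.
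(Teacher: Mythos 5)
Your proposal is correct and follows exactly the paper's own (one-sentence) argument: both reduce the corollary to Theorem~\ref{maintheorem} by noting that the hypotheses furnish an integer $N\geq 3$ attained by no point of $C$, hence by only finitely many points. Your explicit handling of the bounded case via $N=\max\{M+1,3\}$ just spells out the detail the paper leaves implicit.
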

\begin{proof}
If $C$ is a Cantor set as described in the corollary, then there is an integer $N\geq 3$ as in the statement of Theorem \ref{maintheorem}.
\end{proof}

By considering the open $3$-manifolds that are the complements of the Cantor sets in the above results, we are able to show in Section 7 that these open $3$-manifolds have uncountably many associated nonhomeomorphic open $3$-manifolds with the same fundamental group. In particular, we show:

\begin{theorem} \label{endtheorem}
Let $M$ be an open $3$-manifold with end point (Freudenthal) compactification $M^{\ast}$ such that $M^{\ast}\cong S^{3}$ and such that $M^{\ast}\setminus M$ is a Cantor set $C$. If the Cantor set $C$ has an associated integer $N\geq 3$ such that only finitely many points of $C$ have local genus $N$, then there are uncountably many open $3$-manifolds not homeomorphic to $M$ that have the same fundamental group as $M$.
\end{theorem}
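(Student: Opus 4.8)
The plan is to reduce Theorem \ref{endtheorem} to the Main Theorem by translating statements about open $3$-manifolds back into statements about Cantor sets in $S^3$ (equivalently $R^3$) and their complements. The hypothesis hands us exactly the right dictionary: the Freudenthal (end point) compactification $M^\ast$ is homeomorphic to $S^3$, and the set of added ends $M^\ast\setminus M$ is a Cantor set $C$ with the prescribed local-genus property. So $M$ is itself homeomorphic to $S^3\setminus C$, the complement of a Cantor set satisfying the hypothesis of the Main Theorem.

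\smallskip

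First I would make the identification $M\cong S^3\setminus C$ precise. Since $M^\ast\cong S^3$ with $M^\ast\setminus M = C$, restricting the homeomorphism $M^\ast\to S^3$ to $M$ gives a homeomorphism from $M$ onto $S^3$ minus a Cantor set equivalent to $C$; I will abuse notation and call this Cantor set $C$ as well. Next I would invoke the Main Theorem (noting the remark in the excerpt that the techniques apply verbatim to embeddings in $S^3$): because $C$ has an associated integer $N\ge 3$ with only finitely many points of local genus $N$, there are uncountably many pairwise inequivalent Cantor sets $C_\alpha\subset S^3$ whose complements $S^3\setminus C_\alpha$ all have the same fundamental group as $S^3\setminus C$, hence the same fundamental group as $M$.

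\smallskip

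Then I would set $M_\alpha = S^3\setminus C_\alpha$; each $M_\alpha$ is an open $3$-manifold with $\pi_1(M_\alpha)\cong\pi_1(M)$. The crux is to upgrade the inequivalence of the Cantor sets $C_\alpha$ into nonhomeomorphism of the complementary manifolds $M_\alpha$. The key tool is the canonical recovery of the embedding from the manifold: for an open $3$-manifold whose end compactification is $S^3$ with Cantor set of ends, the compactification is intrinsic, so a homeomorphism $M_\alpha\to M_\beta$ extends to a homeomorphism of the Freudenthal compactifications $S^3\to S^3$ carrying $C_\alpha$ onto $C_\beta$, i.e.\ it witnesses an equivalence of the two Cantor sets. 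Since the $C_\alpha$ are pairwise inequivalent, the $M_\alpha$ are pairwise nonhomeomorphic, and at most one of them can be homeomorphic to $M$; discarding that one still leaves uncountably many manifolds not homeomorphic to $M$, completing the argument.

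\smallskip

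The main obstacle I expect is the extension step, namely verifying that a homeomorphism of the open manifolds extends to the end compactifications in a way that preserves the Cantor sets. This requires that the Freudenthal compactification be a functorial/topological invariant of $M_\alpha$ (so that any homeomorphism $M_\alpha\to M_\beta$ induces a homeomorphism $M_\alpha^\ast\to M_\beta^\ast$ matching up the end sets), and that under the identification $M_\alpha^\ast\cong S^3$ the end set is precisely $C_\alpha$. Establishing this naturality of ends — and checking that the induced map on $S^3$ genuinely realizes an equivalence of embedded Cantor sets rather than merely a homeomorphism of abstract Cantor sets — is the technical heart; the remaining counting and the ``at most one exception'' bookkeeping are routine once the extension is in hand.
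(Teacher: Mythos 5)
Your proposal is correct and takes essentially the same route as the paper: identify $M$ with the complement of $C$, take the complements of the uncountably many inequivalent Cantor sets $C_{\alpha}$ produced by Theorem \ref{maintheorem}, and upgrade inequivalence of the $C_{\alpha}$ to nonhomeomorphism of the complements by extending any homeomorphism of complements over the ends. The extension step you single out as the technical heart is exactly the paper's Lemma \ref{endlemma}, which the paper notes is a consequence of the theory of Freudenthal compactifications and ends (your justification) and, for completeness, proves directly using defining sequences.
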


\section{Terminology}\label{DefSection}

A subset $A\subset R^n$ is said to be \emph{rigid} if whenever
$f\colon R^n\to R^n$ is a homeomorphism with $f(A)=A$ it
follows that $f|_A=id_A$.  There are many examples in $R^3$ of
wild Cantor sets that are either rigid or have simply connected
complement. In \cite{GaReZe06},  examples were constructed having
both properties. 

See the bibliography for more results on embeddings of Cantor sets.
In particular, see Kirkor \cite{Kirkor}, DeGryse and Osborne \cite{DeGryse},
Ancel and Starbird \cite{Ancel}, and Wright \cite{Wright4} for
further discussion of wild Cantor sets with simply connected
complement.

Two Cantor sets $X$ and $Y$ in $R^3 $ are said to be
\emph{topologically distinct} or \emph{inequivalent} if there is
no homeomorphism of $R^3 $ to itself taking $X$ to $Y$. Sher
proved in \cite{Sher1} that there  exist uncountably many
inequivalent Cantor sets in $R^3$. He showed that varying the
number of components in the Antoine construction leads to these
inequivalent Cantor sets.

A simple way of producing new Cantor sets in $R^{3}$ is to take the union of two disjoint Cantor sets in $R^{3}$. This leads to the following definition. A Cantor set in $R^{3}$ is said to be \emph{splittable} into Cantor sets $C_{1}$ and $C_{2}$ if
\begin{itemize}
\item $C=C_{1}\cup C_{2}$, $C_{1}\cap C_{2}=\emptyset$ and
\item There are disjoint tame closed $3$-cells $D_{1}$ and $D_{2}$ in $R^{3}$
with
$C_{1}\subset D_{1}$ and $C_{2}\subset D_{2}$.
\end{itemize}

Given a Cantor set $X$ in $R^3 $, we denote the fundamental group
of its complement by $\pi_{1}(X^{c})$. As in the following remark, Theorem \ref{rigidtheorem} could be used to produce splittable examples of Cantor sets $X\cup C$ as the union of disjoint sets    with 
$\pi_{1}(X^{c})\simeq \pi_{1}((X\cup C)^{c})$. 
The examples we produce in the present paper require a more careful construction and are not splittable in this fashion.

\begin{remark}
Note that if $C$ is splittable into $C_{1}$ and $C_{2}$, then a Seifert - Van Kampen argument shows that $\pi_{1}(C^{c})\simeq \pi_{1}(C_{1}^{c})\ast\pi_{1}(C_{2}^{c})$ where $\ast$ represents the free product. Thus if  $\pi_{1}(C_{1}^{c})$ is trivial, then $\pi_{1}(C^{c})\simeq \pi_{1}(C_{2}^{c})$.
\end{remark}

\section{Defining Sequences and Local Genus}\label{GenusSection}

The following definitions about genus are from
\cite{Ze05}.

A \emph{defining sequence} for a Cantor set $X\subset R^3$ is a
sequence $(M_i)$ of compact 3-manifolds with boundary such that

\begin{itemize}
\item[(a)] each $M_{i}$ consists of pairwise disjoint cubes with
handles;
\item[(b)] $M_{i+1}\subset \mathop{\rm Int}\nolimits M_{i}$ for
each $i$; and
\item[(c)] $X=\bigcap_{i}M_{i}$.
\end{itemize}

Let
${ \mathcal{D}}(X)$ be the set of all defining sequences for $X$.
It is known (see \cite{Armentrout1}) that every Cantor set in $R^{3}$ has a
defining sequence, but the sequence is not uniquely determined.
In fact, every Cantor set has many inequivalent (see
\cite{Sher1} for the definition) defining sequences.

Let $M$ be a handlebody. We denote the genus of $M$ by
$g(M)$. For a disjoint union of handlebodies
$M=\bigsqcup_{\lambda\in \Lambda} M_\lambda$, we define
$g(M)=\sup\{g(M_\lambda);\ \lambda\in\Lambda\} $.

Let $(M_i)\in{\mathcal{D}}(X)$ be a defining sequence for a
Cantor set $X\subset R^3$. For any subset $A\subset X$ we denote
by $M_i^A$ the union of those components of $M_i$ which intersect
$A$. Define
\begin{eqnarray*}
g_A(X;(M_i)) &=& \sup\{g(M_i^A);\ i\geq 0\}\ \ \mbox{ and} \\
g_A(X) &=& \inf\{ g_A(X;(M_i));\ (M_i) \in {\mathcal{D}}(X)\}.
\end{eqnarray*}
The number $g_A(X)$ is either a nonnegative integer, or $\infty$, and is called 
\emph{the genus of the Cantor set
$X$ with respect to the subset $A$}. For $A=\{x\}$ we call the
number $g_{\{x\}}(X)$  \emph{the local genus of the Cantor set
$X$ at the point $x$} and denote it by $g_x(X)$. For $A=X$ we
call the number $g_X(X)$ \emph{the genus of the Cantor set $X$}
and denote it by $g(X)$.

Let $x$ be an arbitrary point of a Cantor set $X$ and
$h\colon R^3\to R^3$ a homeomorphism. Then any defining
sequence for $X$ is mapped by $h$ onto a defining sequence for
$h(X)$. Hence the local genus $g_x(X)$ is the same as the local
genus $g_{h(x)}(h(X))$. Therefore local genus is an embedding invariant.

Determining the (local) genus of a given Cantor set using the
definition is not easy. If a Cantor set is given by a defining
sequence one can  determine an upper bound. 

A direct consequence of the definitions
is the following result. 

\begin{lemma}
\label{localgenuslemma}
The local genus of $x$ in $C$ is  a nonnegative integer $k$ if and only if: 
\begin{enumerate}
\item For each defining sequence $(M_{i})$  for $C$, 
 there exists a natural number $N$ such that if $n\geq N$, then 
 $g(M_{n}^{\{x\}})\geq k$, and
 \item There exists a defining sequence $(N_{i})$ for $C$ and  a natural number $M$
 so that if $i\geq M$, then $g(N_{i}^{\{x\}})=k$.
 \end{enumerate}
 The local genus of $x$ in $C$ is   $\infty$ if and only if:
\begin{itemize}
\item[] For each defining sequence $(M_{i})$  for $C$, and for every pair of
natural numbers $(j,k)$, there exists an interger $\ell\geq j$ with
 $g(M_{\ell}^{\{x\}})\geq k$. 
 \end{itemize}
\end{lemma}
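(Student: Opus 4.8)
The plan is to read the statement as an unwinding of the nested extremum $g_x(C)=\inf_{(M_i)}\sup_{i} g(M_i^{\{x\}})$, where $M_i^{\{x\}}$ denotes the single component of $M_i$ containing $x$. The whole argument rests on one elementary closure property of the family $\mathcal{D}(C)$ of defining sequences, which I would isolate at the outset: if $(M_i)$ is a defining sequence and $n_1<n_2<\cdots$ is any increasing sequence of indices, then $(M_{n_j})_j$ is again a defining sequence for $C$; in particular every tail $(M_i)_{i\ge j}$ is one. Indeed, nesting is inherited from $M_{i+1}\subset\mathop{\rm Int}\nolimits M_i$, each chosen $M_{n_j}$ is untouched and so is still a disjoint union of cubes with handles, and $\bigcap_j M_{n_j}=\bigcap_i M_i=C$ because the index set is cofinal. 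A second, purely arithmetic, ingredient is that the genera are nonnegative integers, so an infimum over $\mathcal{D}(C)$ that is finite is attained as a minimum.

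For the finite case I would prove the two directions separately. Assume $g_x(C)=k$. Condition (1) follows by contraposition from the closure property: if some defining sequence $(M_i)$ had $g(M_i^{\{x\}})\le k-1$ for infinitely many $i$, then the subsequence built from exactly those indices would be a defining sequence whose supremum is at most $k-1$, forcing $g_x(C)\le k-1$, a contradiction; hence every defining sequence satisfies $g(M_n^{\{x\}})\ge k$ for all large $n$. For (2), attainment of the infimum yields a defining sequence $(N_i)$ with $\sup_i g(N_i^{\{x\}})=k$, so that $g(N_i^{\{x\}})\le k$ throughout; applying the already-established condition (1) to $(N_i)$ gives $g(N_i^{\{x\}})\ge k$ eventually, hence $=k$ for all large $i$. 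Conversely, (1) shows $\sup_i g(M_i^{\{x\}})\ge k$ for every defining sequence, so $g_x(C)\ge k$, while the tail $(N_i)_{i\ge M}$ supplied by (2) is a defining sequence whose supremum equals $k$, so $g_x(C)\le k$; together these give $g_x(C)=k$.

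The infinite case is handled by the same tail trick. If $g_x(C)=\infty$ then every defining sequence, and hence every tail of it, has infinite supremum; applying this to the tail $(M_i)_{i\ge j}$ produces, for each target $k$, an index $\ell\ge j$ with $g(M_\ell^{\{x\}})\ge k$, which is exactly the stated condition. For the converse, taking $j=0$ in that condition shows $\sup_i g(M_i^{\{x\}})=\infty$ for every defining sequence, so the infimum defining $g_x(C)$ is $\infty$.

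I expect the only step that is more than formal bookkeeping to be the \emph{eventually} clause in (1): the definition alone delivers merely $\sup_i g(M_i^{\{x\}})\ge k$, i.e.\ a single large-genus stage, and upgrading this to large genus at \emph{all} sufficiently late stages is precisely what the subsequence-extraction argument accomplishes. Everything else is a direct translation of the $\inf$ and $\sup$ once the closure of $\mathcal{D}(C)$ under passing to subsequences and the well-ordering of the nonnegative integers are in hand.
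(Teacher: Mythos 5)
Your proof is correct, and it matches the paper's approach: the paper offers no explicit argument at all, stating the lemma as ``a direct consequence of the definitions,'' and your write-up is exactly the intended unwinding of $g_x(C)=\inf_{(M_i)}\sup_i g(M_i^{\{x\}})$. The two ingredients you isolate --- that tails and subsequences of defining sequences are again defining sequences, and that a finite infimum over nonnegative integers is attained --- are precisely what makes the ``direct consequence'' go through, including the only delicate point (upgrading a single large-genus stage to all sufficiently late stages in condition (1)).
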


\section{Wedges of Cantor Sets}\label{WedgeSection}

Every Cantor set in $R^{3}$ is contained in a closed round 3-cell. By shrinking the radius of this cell, one can find a 3-cell that contains the Cantor set and which has a point (or points) of the Cantor set in its boundary.

\begin{definition} 
Let $C$ be a Cantor set in $R^{3}$, and $B$ a tame $3$-cell in $R^{3}$ 
with $C\subset B$. If $C\cap \text{Bd} (B) \neq \emptyset$, $C$ is said to be
\emph{supported} by $B$.  
\end{definition}

The following result is a consequence of the definition of tameness.

\begin{lemma}
\label{SupportLemma}
{\rm(See Figure \ref{Supportfigure}.)} 
Suppose $C$ is a Cantor set in $R^{3}$ supported by $B$,  
and $x\in C\cap \text{Bd} (B)$. Then if $(D,p)$ is a pair consisting of a tame ball of 
some radius in $R^{3}$ together with a point in the boundary of this ball, then there is a homeomorphism from $R^{3}$ to itself taking $(B,x)$ to  $(D,p)$.
\end{lemma}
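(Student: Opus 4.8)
The plan is to reduce this to a statement about pairs of tame balls with a marked boundary point, and then invoke the standard fact that tameness gives local flatness that can be straightened by an ambient homeomorphism. First I would recall that the pair $(B,x)$ consists of a tame $3$-cell together with a point $x$ on its boundary $\mathrm{Bd}(B)$, and similarly $(D,p)$ is a tame ball with a marked boundary point. Since both $B$ and $D$ are tame closed $3$-cells in $R^3$, each bounds a $2$-sphere boundary that is tamely (bicollared) embedded, so by the generalized Schoenflies theorem there are homeomorphisms of $R^3$ to itself carrying $B$ to a standard round ball and $D$ to a standard round ball. Composing, it suffices to produce a homeomorphism of $R^3$ taking one standard round ball to another while sending a prescribed boundary point to a prescribed boundary point.

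The key steps, in order: (1) Use tameness of $B$ to obtain an ambient homeomorphism $h_1\colon R^3\to R^3$ with $h_1(B)$ equal to the closed unit ball $\overline{B^3}$; track the image $h_1(x)$, which lies on the unit sphere $S^2$. (2) Do the same for $D$, obtaining $h_2\colon R^3\to R^3$ with $h_2(D)=\overline{B^3}$ and $h_2(p)\in S^2$. (3) Produce a homeomorphism of $R^3$ carrying $\overline{B^3}$ to itself and taking $h_1(x)$ to $h_2(p)$; since $O(3)$ acts transitively on $S^2$ and extends to a linear homeomorphism of $R^3$ preserving the unit ball, a suitable rotation $\rho$ does this. (4) Then $h_2^{-1}\circ\rho\circ h_1$ is the desired homeomorphism of $R^3$ taking $(B,x)$ to $(D,p)$, since it carries $B$ to $D$ and $x$ to $p$.

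The main obstacle is step (1): promoting the tameness of the $3$-cell $B$ to an actual ambient homeomorphism normalizing it to the round ball while keeping careful control of where the boundary point $x$ goes. This is exactly where the hypothesis of tameness is essential, since for a wildly embedded cell no such straightening exists. Concretely, tameness means there is a homeomorphism of $R^3$ carrying $B$ onto a standard (round or polyhedral) cell; the content is that this is a global ambient homeomorphism, not merely a homeomorphism of $B$ onto a ball, and this follows from the standard theory of tame embeddings of $2$-spheres in $R^3$ together with the Schoenflies theorem. I expect the radius-matching to be routine: once both cells are normalized to round balls, a dilation composed with a translation and rotation of $R^3$ adjusts the radius and moves the marked point, all of which are genuine homeomorphisms of the whole space. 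Thus the heart of the argument is the tameness-to-ambient-homeomorphism passage, and the marked-point bookkeeping is handled by the transitivity of the rotation group on the sphere.
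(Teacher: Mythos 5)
Your argument is correct and is essentially the fleshed-out version of what the paper does: the paper offers no written proof, simply noting that the lemma "is a consequence of the definition of tameness," which is precisely your normalization of both tame cells to the round unit ball by ambient homeomorphisms followed by a rotation matching the marked boundary points. The composition $h_2^{-1}\circ\rho\circ h_1$ you write down is exactly the homeomorphism the paper has in mind.
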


\begin{figure}[h]
\includegraphics[width=.25\textwidth]{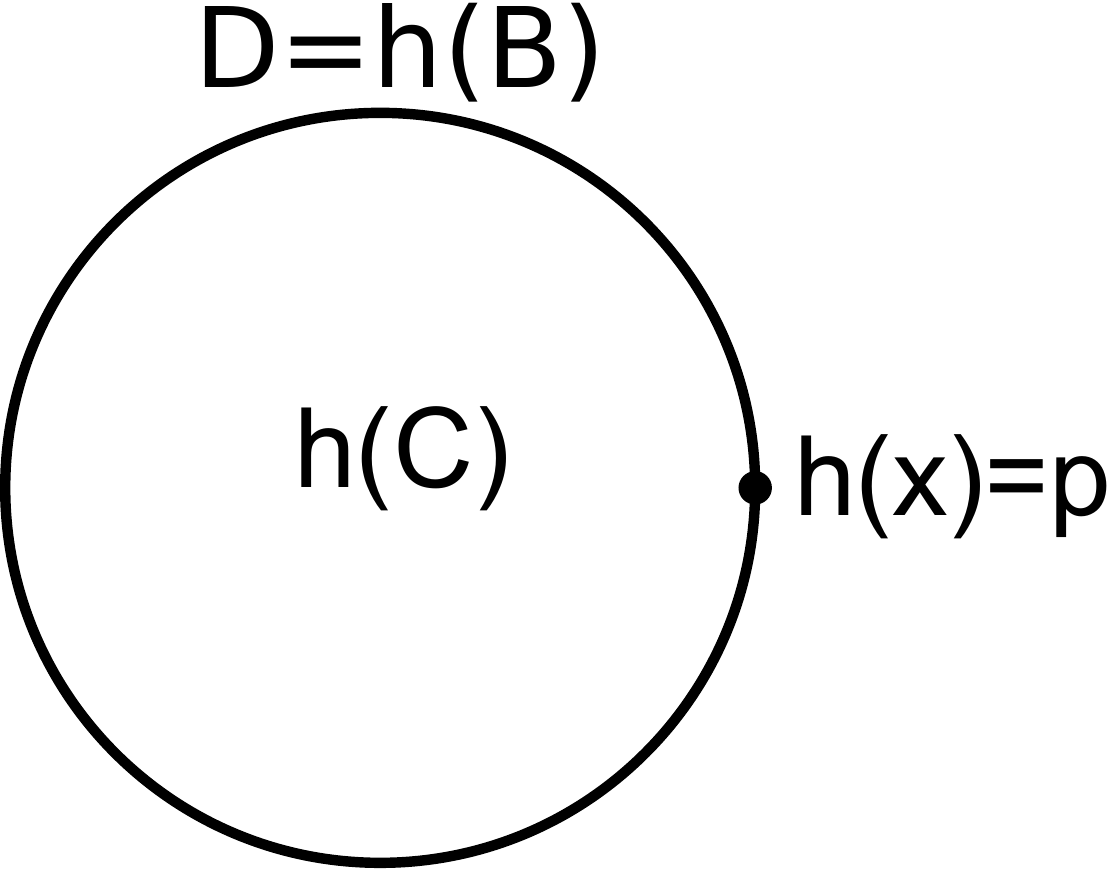}
\caption{Image of $B$ and $C$}
\label{Supportfigure}
\end{figure}

\newpage
\begin{definition}
\label{wedgedef}
(See Figure \ref{Wedgefigure}.) Let $C_{i}$  be a Cantor set in $R^{3}$ supported by $B_{i}$ with
$x_{i}\in \text{Bd} (B_{i})\cap C_{i}, i\in\{1,2\}$. The wedge of 
$C_{1}$ and $C_{2}$ at $x_{1}$ and $x_{2}$, $(C_{1},x_{1}) \bigvee (C_{2},x_{2})$, 
is defined as follows. Choose  orientation preserving self-homeomorphisms $h_{i}$  
of $R^{3}$ taking $(B_{i},x_{i})$ to $(D_{i},\textbf{0})$ where $D_{i}$ is the ball of radius 
$1$ about the point $\left(2*(i-\frac{3}{2}),0,0\right)$ in $R^{3}$. Then
\[
(C_{1},x_{1}) \bigvee (C_{2},x_{2})\equiv h_{1}(C_{1})\cup h_{2}(C_{2}).
\]

\end{definition}

\begin{figure}[h]
\includegraphics[width=.35\textwidth]{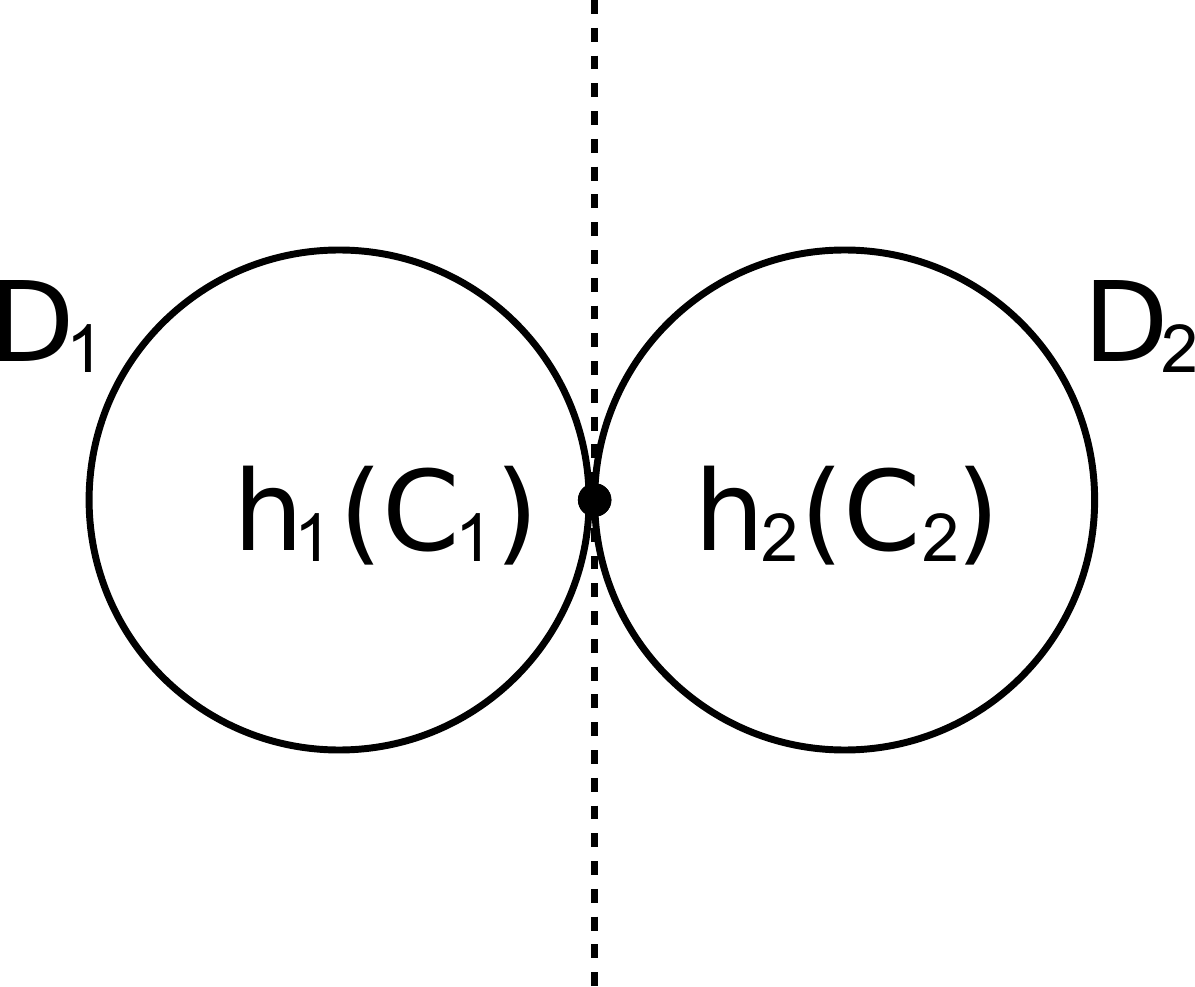}
\caption{Wedge of Cantor Sets}
\label{Wedgefigure}
\end{figure}

\begin{remark} Note that $(C_{1},x_{1}) \bigvee (C_{2},x_{2})$ is a Cantor set with
sub-Cantor sets $h_{i}(C_{i})$ embedded in $R^{3}$ in an equivalent manner to $C_{i}$.
\end{remark}

To make the proof of the next theorem and a result in the next section easier, we provide an alternate 
definition of the wedge of two Cantor sets that is equivalent to the definition above.

\begin{definition}
\label{wedgedef2}
[Alternate Definition of Wedge]
(See Figure \ref{altwedge}.) Let $C_{i}$  be a Cantor set in $R^{3}$ supported by $B_{i}$ with
$x_{i}\in \text{Bd} (B_{i})\cap C_{i}, i\in\{1,2\}$. The wedge of 
$C_{1}$ and $C_{2}$ at $x_{1}$ and $x_{2}$, $(C_{1},x_{1}) \bigvee^{\prime} (C_{2},x_{2})$ 
is defined as follows. Choose  orientation preserving self homeomorphisms $k_{i}$  
of $R^{3}$ taking $(B_{i},x_{i})$ to $(D^{\prime}_{i},(2*(i-\frac{3}{2}),0,0)$, where $D^{\prime}_{i}$ is the ball of radius 
$1$ about the point $(4*(i-\frac{3}{2}),0,0)$ in $R^{3}$. Let $A$ be the straight arc
from $(-1,0,0)$ to $(1,0,0)$ in $R^{3}$. Let $p:R^{3}\rightarrow R^{3}/A$ be the quotient map.
Then 
\\[6pt]
\centerline{$
 (C_{1},x_{1}) \bigvee^{\prime} (C_{2},x_{2})\equiv p\left(\strut k_{1}(C_{1})\cup k_{2}(C_{2})\right)
\equiv p\left(\strut k_{1}(C_{1})\cup A\cup k_{2}(C_{2})\right).
$}
\end{definition}
\begin{figure}[h]
\includegraphics[width=.45\textwidth]{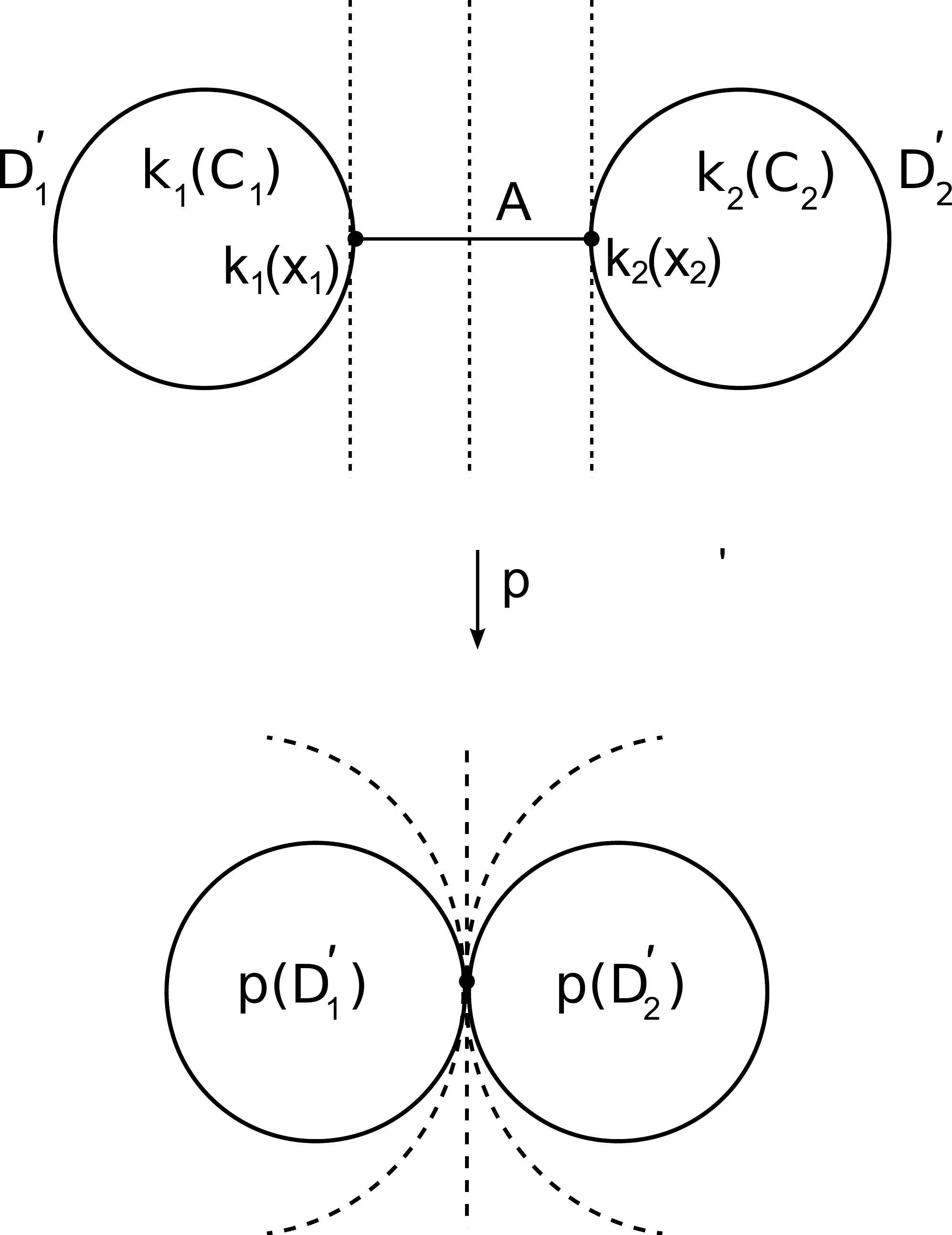}
\caption{Alternate Construction for Wedge }
\label{altwedge}
\end{figure}

Since $R^{3}/A\cong R^{3}$ and since $p\vert_{k_{i}(C_{i})}$ is 1-1,
it follows that $(C_{1},x_{1}) \bigvee^{\prime} (C_{2},x_{2})$ is homeomorphic to 
$(C_{1},x_{1}) \bigvee (C_{2},x_{2})$. It remains to check that the embeddings of
these homeomorphic spaces in $R^{3}$ are equivalent. This follows from the next lemma.

\begin{lemma}
\label{equivalentembeddinglemma}
$\left(
R^{3}, D_{1},D_{2},\textbf{0}\right) $
is homeomorphic to
$\left( R^{ 3}/A,
p(D^{\prime}_{1}),p(D^{\prime}_{2}),\right.$\\
$\left. p((1,0,0))\right)$.
\end{lemma}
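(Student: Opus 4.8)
The plan is to realize the quotient $R^{3}/A$ concretely as $R^{3}$ by collapsing the arc $A$ to a point, and then to keep track of the images of the two balls. First I would record the data of Definitions \ref{wedgedef} and \ref{wedgedef2} explicitly: on the left, $D_{1}$ and $D_{2}$ are the unit balls centered at $(-1,0,0)$ and $(1,0,0)$, tangent at $\textbf{0}$; on the right, $D'_{1}$ and $D'_{2}$ are the \emph{disjoint} unit balls centered at $(-2,0,0)$ and $(2,0,0)$, and $A=\{(x,0,0):-1\le x\le 1\}$ is the straight arc whose endpoints $(-1,0,0)$ and $(1,0,0)$ lie on $\text{Bd}\,D'_{1}$ and $\text{Bd}\,D'_{2}$ respectively. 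The key structural observation is that $A$ is a flat (straight) arc meeting each $\overline{D'_{i}}$ in exactly one point, namely an endpoint of $A$ lying on $\text{Bd}\,D'_{i}$, and otherwise disjoint from both closed balls.

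Next I would produce a quotient map $q\colon R^{3}\to R^{3}$ whose only nondegenerate point-inverse is $A$, collapsed to a single point, and which is a homeomorphism off $A$. This is exactly where flatness is used: the upper semicontinuous decomposition of $R^{3}$ whose sole nondegenerate element is the flat arc $A$ is shrinkable (a flat arc is cellular), so the associated quotient map is a near-homeomorphism and $R^{3}/A\cong R^{3}$; equivalently, since $A$ is straight one can write down a rotationally symmetric ``pinch'' of a neighborhood of $A$ realizing the collapse. Since the canonical quotient map $p$ and the map $q$ collapse the \emph{same} decomposition, they differ by a homeomorphism, so it suffices to analyze the images under $q$.

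I would then track the balls. Because each $\overline{D'_{i}}$ meets $A$ in a single point, no two points of $\overline{D'_{i}}$ are identified by $q$, so $q|_{\overline{D'_{i}}}$ is a continuous injection of a compact set and hence a homeomorphism onto a tame $3$-cell; moreover $q(\overline{D'_{1}})\cap q(\overline{D'_{2}})=q(A)$ is the single collapse point. Thus the images are two tame $3$-cells tangent at one point, that point being the image of $(1,0,0)$. Finally I would apply a self-homeomorphism of $R^{3}$ carrying this configuration to the standard one $(R^{3},D_{1},D_{2},\textbf{0})$: since the two cells arise from the fixed standard dumbbell $\overline{D'_{1}}\cup A\cup\overline{D'_{2}}$, one can arrange the collapse (or standardize afterward) so that the cells land on the round tangent balls $D_{1},D_{2}$ and the contact point lands on $\textbf{0}$, yielding the required equivalence of quadruples.

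The main obstacle is the behavior at the two endpoints of $A$, where the arc is attached to the balls, and hence at the single contact point of the resulting configuration. One must ensure that the collapse is realized by a genuine quotient map whose only nondegenerate fiber is $A$ (so that the balls are carried to tame, not wild, cells) and that the two resulting $3$-cells meet \emph{tangentially} at a single tame point in the same way two round balls do. The flatness of $A$, together with the fixed round position of $D'_{1}$ and $D'_{2}$, is precisely what guarantees both that $R^{3}/A\cong R^{3}$ and that the final standardizing homeomorphism to $(D_{1},D_{2},\textbf{0})$ exists; the remaining checks (injectivity off $A$, continuity, and that $q$ is a closed hence quotient map) are routine.
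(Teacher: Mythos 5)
Your proposal is correct and takes essentially the same route as the paper: the paper's proof simply exhibits a closed map $h\colon (R^{3},D^{\prime}_{1},D^{\prime}_{2},A)\rightarrow (R^{3},D_{1},D_{2},\textbf{0})$ whose only nondegenerate point-inverse is $h^{-1}(\textbf{0})=A$ and then invokes standard quotient-space results, which is precisely your collapse-then-factor-through-$p$ argument with the final standardizing self-homeomorphism folded into the single map $h$. The additional details you supply (cellularity of the flat arc, injectivity of the collapse on each closed ball, tracking the tangency point) are exactly what the paper leaves implicit as ``standard topological results.''
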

\begin{proof}
There is a closed map $h:(R^{3},D_{1}^{\prime},D_{2}^{\prime},A)\rightarrow
(R^{3},D_{1},D_{2},\textbf{0})$ with the only nondegenerate point inverse
being $h^{-1}(\textbf{0})=A$. By standard topological results about quotient spaces, one can now establish the claim.
\end{proof}

\begin{theorem}
\label{pionetheorem}
Suppose $C_{1}$ and $C_{2}$ are as in Definition \ref{wedgedef2} and that
$\pi_{1}(C_{2}^{c})$ is trivial. Then $\pi_{1}\left(\strut^{\ } \left(\strut(C_{1},x_{1}) \bigvee (C_{2},x_{2})\right)^{c}\ \right)$
is isomorphic to $\pi_{1}(C_{1}^{c})$.
\end{theorem}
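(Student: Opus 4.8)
The plan is to reduce the assertion to a fundamental-group computation for the complement of an honest subset of $R^3$, and then to kill the contribution of $C_2$ by a Seifert--Van Kampen argument across a sphere that meets the connecting arc exactly once.

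First I would pass to the alternate model of Definition \ref{wedgedef2}. Write $p\colon R^3\to R^3/A$ for the quotient collapsing the arc $A$, and set $Y=k_1(C_1)\cup A\cup k_2(C_2)$, the preimage of the wedge. The map $p$ restricts to a homeomorphism of $R^3\setminus A$ onto $(R^3/A)\setminus p(A)$; since the wedge point $p(A)$ lies in the wedge itself, $p$ carries $R^3\setminus Y$ homeomorphically onto the complement of the wedge. By Lemma \ref{equivalentembeddinglemma} this model has the same embedding as the one of Definition \ref{wedgedef}, so it suffices to prove $\pi_1(R^3\setminus Y)\cong\pi_1(C_1^c)$. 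Note that $A$ is a straight, hence tame, arc meeting $k_1(C_1)$ only at $k_1(x_1)$ and $k_2(C_2)$ only at $k_2(x_2)$, and each $k_i(C_i)$ is embedded equivalently to $C_i$.

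The key technical ingredient is a \emph{whisker lemma}: if $G$ is a tame arc in an open $3$-manifold $M$ with one endpoint free (lying on the frontier of $M$, equivalently pushed off to infinity) and meeting the deleted set only at its other endpoint, then the inclusion $M\setminus G\hookrightarrow M$ induces an isomorphism on $\pi_1$. Surjectivity is general position: a loop of dimension $1$ can be perturbed off an arc of dimension $1$ inside a $3$-manifold. Injectivity is a finger/piping move: a spanning disk meets $G$ transversally in finitely many interior points, and each intersection is removed by dragging the corresponding sheet along $G$ toward the free end and sliding it off. The point to stress is that all pushing is done toward the free end, so the wildness of the Cantor set at the other endpoint never interferes; tameness of $G$ supplies the regular-neighborhood tube in which the moves occur. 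I expect this lemma to be the main obstacle, precisely because one must argue that the single free end suffices while the blocked, wild endpoint plays no role.

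With the lemma in hand I would choose a round ball $\hat B$, with boundary sphere $S$, so that $\hat B\supset k_2(C_2)$, $\hat B\cap k_1(C_1)=\emptyset$, and $S$ meets $A$ transversally in a single point $q$, and then apply Seifert--Van Kampen to the open sets $V$ (a neighborhood of $\hat B$ minus $Y$) and $U$ (the complementary region minus $Y$). The overlap $U\cap V$ is a collar $S^2\times(-\epsilon,\epsilon)$ with one unknotted strand of $A$ deleted, hence $(S^2\setminus\{q\})\times(-\epsilon,\epsilon)$, which is contractible; thus Van Kampen yields the free product $\pi_1(U)\ast\pi_1(V)$ with no amalgamation. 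Applying the whisker lemma to the terminal sub-arc $A\cap\hat B$ (free end at $q$) gives $\pi_1(V)\cong\pi_1(\mathrm{Int}\,\hat B\setminus k_2(C_2))\cong\pi_1(C_2^c)=1$ by hypothesis, while applying it to the sub-arc $A\setminus\mathrm{Int}\,\hat B$ (again free end at $q$) gives $\pi_1(U)\cong\pi_1((R^3\setminus\hat B)\setminus k_1(C_1))\cong\pi_1(R^3\setminus k_1(C_1))=\pi_1(C_1^c)$; the last step holds because filling in the tame ball $\hat B$, which is disjoint from $k_1(C_1)$, does not change $\pi_1$ (Van Kampen across $S$, whose two sides meet in a simply connected collar). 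Combining, $\pi_1(R^3\setminus Y)\cong\pi_1(C_1^c)\ast 1\cong\pi_1(C_1^c)$, which is the desired conclusion.
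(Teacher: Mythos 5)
Your overall architecture is sound and essentially parallels the paper's: pass to the model of Definition \ref{wedgedef2}, note that $p$ carries $R^{3}\setminus\left(k_{1}(C_{1})\cup A\cup k_{2}(C_{2})\right)$ homeomorphically onto the complement of the wedge, then run Seifert--Van Kampen. Your sphere-based decomposition is in fact a clean variant of the paper's half-space decomposition: your overlap $(S^{2}\setminus\{q\})\times(-\epsilon,\epsilon)$ is simply connected, so you get a free product with no amalgamation, whereas the paper's overlap is a slab minus the arc, with $\pi_{1}\cong\mathbb{Z}$. The ball-filling Van Kampen steps across simply connected shells are fine. And you are right that the whole weight of the theorem rests on your ``whisker lemma'' --- that is exactly the content of the paper's unproved assertions $\pi_{1}(U)\simeq\pi_{1}(C_{1}^{c})$ and $\pi_{1}(V)\simeq\pi_{1}(C_{2}^{c})$.

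The gap is in your proof of that lemma, at the injectivity step. ``Dragging the sheet along $G$ toward the free end and sliding it off'' is not a legal move: inside the open manifold $M$ the arc has no terminus at its free end --- it exits the space --- so there is nothing to slide off of, and a compact disk cannot be pushed off an end of $M$. Worse, the principle you lean on, that ``the single free end suffices,'' is false: take $M=T^{2}\times R$ and $G=\{pt\}\times R$, a tame properly embedded line with both ends free. Then $M\setminus G=(T^{2}\setminus\{pt\})\times R$ has fundamental group free on two generators $a,b$, and the meridian of $G$ is the commutator $[a,b]\neq 1$, although it dies in $\pi_{1}(M)\cong\mathbb{Z}^{2}$. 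So no argument that uses only the existence of a free end (yours uses nothing about the other end, by design) can prove injectivity; and your hypotheses as stated do not exclude this failure mode --- e.g.\ an Antoine necklace filling its first-stage solid torus, with $G$ a tame arc running to the torus frontier, meets them verbatim, and there no sphere separates the necklace from the frontier. What rescues your two applications is a hypothesis you never invoke: each wild Cantor set $k_{i}(C_{i})$ lies in a tame ball ($D^{\prime}_{i}$, or $\hat B$) that the arc meets in exactly one boundary point, and the free end crosses the round sphere $S$ beyond which nothing else of the wedge lies. Hence the meridian, pushed along the tube to just inside $S$, can be capped by a disk consisting of a sphere parallel to $S$ minus a small disk about the strand --- equivalently, (ball containing the Cantor set) $\cup$ (whisker) $\cup$ (region beyond $S$) is a tame cellular dumbbell whose complement is an open $3$-cell. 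The null-homotopy goes around the far side of the ball and never approaches the wild endpoint. If you restate the whisker lemma for such spherical free ends and prove it by this capping argument, your proof is complete; as written, the decisive step is unjustified and the general statement behind it is false.
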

\begin{proof}
$\left(\strut(C_{1},x_{1}) \bigvee (C_{2},x_{2})\right)^{c}$ is homeomorphic to 
$\left(\strut(C_{1},x_{1}) \bigvee^{\prime} (C_{2},x_{2})\right)^{c}$ 
by Lemma \ref{equivalentembeddinglemma}. 
Also, $p:R^{3}\rightarrow R^{3}/A$ takes\\
$W=R^{3}\setminus \left(\strut k_{1}(C_{1})\cup A \cup k_{2}(C_{2})\right)$ 
homeomorphically onto\\
$\left(\strut(C_{1},x_{1}) \bigvee^{\prime} (C_{2},x_{2})\right)^{c}$ 
since $p$ is a quotient map and 
$p$ restricted to $W$ is 1-1.
 So it suffices to show that 
 $\pi_{1}\left(W\right)$
 is isomorphic to $\pi_{1}(C_{1}^{c})$. 
 
 Let $U=\{(x,y,z)\in W \vert x<1\}$ and 
 $V=\{(x,y,z)\in W \vert x> -1\}$. We will apply the Seifert -Van Kampen's Theorem to 
 $U, V,U\cap V$ and $W=U\cup V$. Note that $\pi_{1}(V)\simeq \pi_{1}(C_{2}^{c})$ which is trivial.
  Next, $\pi_{1}(U)\simeq \pi_{1}(C_{1}^{c})$. Also $\pi_{1}(U\cap V)\simeq \mathbb{Z}$.
 Since the inclusion-induced homomorphism from $U\cap V$ to $V$ is trivial, $\pi_{1}(W)\simeq
 \pi_{1}(C_{1}^{c})$.
 \end{proof}

\section{Local Genus of points in a Wedge}
\label{GenusWedgeSection}

The main result of this section, Theorem \ref{wedgetheorem},  states that local genus of points in Cantor sets is preserved by taking wedges, except at the wedge point. This allows us in the next section to distinguish between various wedges with complements having the same fundamental group. 

For the first two results below, we view the wedge of Cantor sets $C_{1}$ and $C_{2}$, 
$(C_{1},x_{1}) \bigvee (C_{2},x_{2})$, as in Definition \ref{wedgedef} and Figure \ref{Wedgefigure}  and for ease of notation, view $C_{1}$ and $C_{2}$ as subspaces of the wedge.

\begin{lemma}Let $W=(C_{1},x_{1}) \bigvee (C_{2},x_{2})$ be as in Definition \ref{wedgedef}. 
Then $g_{\{x_{1}=x_{2}\}}(W)=g_{\{x_{1}\}}(C_{1})+g_{\{x_{2}\}}(C_{2})$.
\end{lemma}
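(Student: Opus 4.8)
The plan is to prove the two inequalities $g_{\{x\}}(W)\le g_{\{x_1\}}(C_1)+g_{\{x_2\}}(C_2)$ and $g_{\{x\}}(W)\ge g_{\{x_1\}}(C_1)+g_{\{x_2\}}(C_2)$ separately, where $x$ denotes the wedge point, using the defining-sequence characterization of local genus in Lemma \ref{localgenuslemma}. The geometric input, available from Definition \ref{wedgedef}, is that the balls $D_1$ and $D_2$ of radius $1$ about $(-1,0,0)$ and $(1,0,0)$ are tangent at the origin, so after applying $h_1,h_2$ we may regard $C_1\subset\{x\le 0\}$ and $C_2\subset\{x\ge 0\}$ with $C_1\cap C_2=\{x\}=\{\mathbf 0\}$, and the plane $P=\{x=0\}$ meeting $C_1\cup C_2$ only at the wedge point. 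Throughout write $g_1=g_{\{x_1\}}(C_1)$ and $g_2=g_{\{x_2\}}(C_2)$.

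For the upper bound, I would start from defining sequences $(A_i)$ for $C_1$ and $(B_i)$ for $C_2$ whose wedge-point components $P_i=A_i^{\{x_1\}}$ and $Q_i=B_i^{\{x_2\}}$ have genus exactly $g_1$ and $g_2$ for all large $i$; such sequences exist by part (2) of Lemma \ref{localgenuslemma}. Transporting by $h_1,h_2$ and then forming, at each level, the boundary connected sum of (shrunken, disjoint) copies of $P_i$ and $Q_i$ along a thin $1$-handle running through a small neighborhood of $\mathbf 0$, I obtain a cube with handles $G_i$ of genus $g(G_i)=g(P_i)+g(Q_i)=g_1+g_2$. Choosing the connecting tubes to be nested, the $G_i$ together with the remaining components of the transported sequences form a defining sequence for $W$ whose wedge-point components have genus $g_1+g_2$; hence $g_{\{x\}}(W)\le g_1+g_2$. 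The only care needed here is to arrange the tubes so that the $G_i$ are nested and $\bigcap_i G_i$ is exactly the germ of $W$ at $\mathbf 0$.

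The lower bound is the main obstacle. Here I would take an arbitrary defining sequence $(N_i)$ for $W$ and, for large $i$, consider the component $R_i=N_i^{\{x\}}$, a cube with handles containing $\mathbf 0$ in its interior and meeting both $C_1$ and $C_2$. The goal is to split $R_i$ along a system of properly embedded disks $\Delta_i\subset R_i$, disjoint from $W$, separating $R_i$ into a piece $R_i^-$ containing $C_1\cap R_i$ and a piece $R_i^+$ containing $C_2\cap R_i$; the disks would be obtained by surgering the planar intersection $R_i\cap P$ and pushing it off the single bad point $\mathbf 0$ to one side. Since cutting a cube with handles along a separating system of disks is additive on genus, this gives $g(R_i)=g(R_i^-)+g(R_i^+)$. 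Capping the cut disks and adjoining the other components of $N_i$ meeting $C_1$ (respectively $C_2$) should produce defining sequences for $C_1$ and $C_2$ whose wedge-point components are $R_i^-$ and $R_i^+$; part (1) of Lemma \ref{localgenuslemma} then forces $g(R_i^-)\ge g_1$ and $g(R_i^+)\ge g_2$ for large $i$, whence $g(R_i)\ge g_1+g_2$ and so $g_{\{x\}}(W)\ge g_1+g_2$.

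I expect the hard part to be the construction underlying the lower bound: arranging the intersection $R_i\cap P$ to be cleaned up into a separating system of disks that avoids the Cantor set (which accumulates at $\mathbf 0$ from both sides), and organizing these splittings coherently across all $i$ so that the two families $(R_i^-)$ and $(R_i^+)$ are genuinely nested and hence define $C_1$ and $C_2$. Controlling that the separation is realized by separating disks only — so that genus truly adds rather than dropping under the cuts — is the crux, and is where the tameness of the supporting balls and the fact that $P$ meets $W$ in the single point $\mathbf 0$ must be used.
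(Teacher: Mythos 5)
Your upper bound is fine in outline (it is essentially the same gluing the paper itself uses later in Section \ref{GenusWedgeSection}, where wedge-point pieces of defining sequences for $C_1$ and $C_2$ are joined through a neighborhood of the wedge point to build a defining sequence for $W$), but your lower bound contains a step that cannot be carried out as stated. You ask for a system of disks $\Delta_i\subset R_i$, disjoint from $W$, splitting $R_i$ into $R_i^-\supset C_1\cap R_i$ and $R_i^+\supset C_2\cap R_i$. This is impossible: the wedge point $\mathbf{0}=h_1(x_1)=h_2(x_2)$ belongs to \emph{both} $h_1(C_1)$ and $h_2(C_2)$, so it would have to lie in $R_i^-\cap R_i^+=\Delta_i$, contradicting $\Delta_i\cap W=\emptyset$. ``Pushing the disks off the single bad point $\mathbf{0}$ to one side'' does not rescue this: whichever side you push toward, the points of that Cantor set sufficiently close to $\mathbf{0}$ land in the other piece, so one of the two required inclusions fails. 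Since both Cantor sets accumulate at the shared point, this is not a technical nuisance to be cleaned up by general position; it is exactly where your argument breaks.

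The repair is to give up on disjoint pieces: $R_i^-$ is to be fed into a defining sequence for $C_1$ and $R_i^+$ into a defining sequence for $C_2$, and these are defining sequences for \emph{different} Cantor sets, so the two handlebodies are allowed to overlap. For instance, push the cut disks slightly into the $C_2$-side, let the $C_1$-piece be the piece containing $\mathbf{0}$, and re-attach to the $C_2$-piece a small ball around $\mathbf{0}$ glued along a boundary disk (genus unchanged) to recapture the points of $C_2$ near $\mathbf{0}$. Note also that your stated worry about separating versus non-separating disks points the wrong way: cutting a cube with handles along any properly embedded disk system never increases the total genus of the resulting pieces, so $g(R_i^-)+g(R_i^+)\le g(R_i)$ holds automatically, and that inequality --- together with the lower bounds on $g(R_i^\pm)$ from part (1) of Lemma \ref{localgenuslemma}, which does require the nestedness bookkeeping you flag --- is all you need; exact additivity under the cuts is never used. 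Finally, for comparison: the paper does not argue any of this directly. Its proof is a one-line appeal to the additivity theorem for local genus (Theorem 13 of \cite{Ze05}), noting that a tame $3$-cell containing $x_1=x_2$ in its interior satisfies that theorem's hypotheses. So what you are attempting is in effect a proof of that cited result; with the overlap fix above it can be pushed through, but as written there is a genuine gap.
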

\begin{proof} This follows directly from Theorem 13 in \cite{Ze05} since there is a 3-cell containing $x_{1}=x_{2}$ in its interior satisfying the conditions needed for that theorem.
\end{proof}

\begin{lemma}Let $W=(C_{1},x_{1}) \bigvee (C_{2},x_{2})$ be as in Definition \ref{wedgedef} and let $p$ be a point in $C_{1}\setminus \{x_{1}\}$. Then $g_{\{x\}}(C_{1})\leq g_{\{x\}}(W)$.
\end{lemma}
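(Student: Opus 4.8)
The plan is to show that any defining sequence for the wedge $W$, when restricted to the components meeting $C_1$, yields a defining sequence for $C_1$ without increasing the genus at $p$; taking the infimum over all defining sequences of $W$ then gives the inequality. Throughout I work in the model of Definition \ref{wedgedef}, identifying $C_1$ with $h_1(C_1)\subset D_1$ and $p$ with $h_1(p)$, so that the wedge point is the origin $\mathbf{0}=x_1=x_2$ and $W=h_1(C_1)\cup h_2(C_2)$ with $D_1\cap D_2=\{\mathbf{0}\}$. Since $p\neq x_1$, the point $p$ lies in $D_1\setminus\{\mathbf{0}\}$, and because the closed balls meet only at $\mathbf{0}$ we have $\operatorname{dist}(q,h_1(C_1))>0$ for every $q\in h_2(C_2)\setminus\{\mathbf{0}\}$.

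First I would record the standard fact that in any defining sequence $(M_i)$ for a Cantor set, the component $M_i^{\{q\}}$ of $M_i$ containing a fixed point $q$ forms a nested sequence of continua whose intersection is the component of the Cantor set containing $q$, namely $\{q\}$; hence $\operatorname{diam} M_i^{\{q\}}\to 0$.

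Next, given an arbitrary $(M_i)\in\mathcal{D}(W)$, I would set $N_i=M_i^{h_1(C_1)}$, the union of those components of $M_i$ that meet $h_1(C_1)$, and verify that $(N_i)$ is a defining sequence for $h_1(C_1)$. Conditions (a) and (b) are immediate: the $N_i$ are disjoint cubes with handles, and if a component of $M_{i+1}$ meets $h_1(C_1)$ then so does the component of $M_i$ containing it, giving $N_{i+1}\subset\operatorname{Int} N_i$. The content is condition (c), $\bigcap_i N_i=h_1(C_1)$: the inclusion $\supseteq$ is clear, while for $\subseteq$ one has $\bigcap_i N_i\subseteq W$, and any $q\in h_2(C_2)\setminus\{\mathbf{0}\}$ is excluded because, by the shrinking fact, $M_i^{\{q\}}$ eventually lies within $\operatorname{dist}(q,h_1(C_1))$ of $q$ and so misses $h_1(C_1)$, removing that component from $N_i$. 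The wedge point $\mathbf{0}=x_1$ is correctly retained, since the component of $M_i$ containing it always meets $h_1(C_1)$.

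Finally I would observe that the component of $M_i$ containing $p$ already meets $h_1(C_1)$ and is therefore retained in $N_i$, so $N_i^{\{p\}}=M_i^{\{p\}}$ and $g(N_i^{\{p\}})=g(M_i^{\{p\}})$ for every $i$. Consequently $g_{\{p\}}(h_1(C_1);(N_i))=g_{\{p\}}(W;(M_i))$, whence $g_{\{p\}}(h_1(C_1))\leq g_{\{p\}}(W;(M_i))$; taking the infimum over $(M_i)\in\mathcal{D}(W)$ gives $g_{\{p\}}(h_1(C_1))\leq g_{\{p\}}(W)$, and the embedding invariance of local genus under $h_1$ turns this into $g_{\{p\}}(C_1)\leq g_{\{p\}}(W)$. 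The main obstacle is the verification of condition (c), specifically separating the points of $h_2(C_2)\setminus\{\mathbf{0}\}$ from $h_1(C_1)$ inside $M_i$ for large $i$; this is exactly where the geometry of the wedge (the two balls touching only at the wedge point) combines with the shrinking-components fact, and the remainder is bookkeeping.
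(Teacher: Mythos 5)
Your proof is correct and takes essentially the same approach as the paper's: restrict a defining sequence for $W$ to the components meeting $C_1$, observe that this is a defining sequence for $C_1$ in which the component containing $p$ is unchanged, and conclude the genus inequality. The differences are minor refinements rather than a different route---the paper invokes Lemma \ref{localgenuslemma} to fix a sequence realizing $g_{\{p\}}(W)=k$ (after splitting off the $\infty$ case), whereas you work with an arbitrary sequence and take infima, and you additionally verify condition (c) of the defining-sequence definition, which the paper leaves implicit.
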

\begin{proof} The result is obvious if $g_{\{x\}}(W)=\infty$. Assume $g_{\{x\}}(W)=k\in \mathbb{N}$.
By Lemma \ref{localgenuslemma}, there is a defining sequence $(M_{i})$ for $W$ such that for every $i$, $g(M_{i}^{\{x\}})=k$. Form a defining sequence $(N_{i})$ for $C_{1}$ as follows.
$N_{i}$ consists of the components of $M_{i}$ that intersect $C_{1}$. Then for every $i$, 
$g(N_{i}^{\{x\}})=k$ and thus again by Lemma \ref{localgenuslemma}, $g_{\{x\}}(C_{1})\leq k$.
\end{proof}

For the next result, we view the wedge of Cantor sets $C_{1}$ and $C_{2}$, 
$(C_{1},x_{1}) \bigvee (C_{2},x_{2})$, as in Definition \ref{wedgedef2} and Figure \ref{altwedge}. 
For ease of notation, we identify $C_{i}$ with $k_{i}(C_{i})$ so that

\centerline{$
 (C_{1},x_{1}) \bigvee^{\prime} (C_{2},x_{2})\equiv p\left(\strut(C_{1})\cup (C_{2})\right)
\equiv p\left(\strut(C_{1})\cup A\cup (C_{2})\right).
$}

\begin{lemma}Let $W=(C_{1},x_{1}) \bigvee^{\prime} (C_{2},x_{2})$ be as in Definition \ref{wedgedef2} and let $x$ be a point in $C_{1}\setminus \{x_{1}\}$ and let $x^{\prime}=p(x)$. Then $g_{\{x\}}(C_{1})\geq g_{\{x^{\prime}\}}(W)$.
\end{lemma}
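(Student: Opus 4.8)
The plan is to prove this reverse inequality (complementing the previous lemma) by pushing a defining sequence for $C_{1}$ forward through the quotient map $p$, rather than restricting one for $W$. If $g_{\{x\}}(C_{1})=\infty$ there is nothing to prove, so I would set $k=g_{\{x\}}(C_{1})\in\mathbb{N}$ and, using Lemma \ref{localgenuslemma}, fix a defining sequence $(N_{i})$ for $C_{1}=k_{1}(C_{1})$ together with an index $M_{0}$ such that $g(N_{i}^{\{x\}})=k$ for all $i\geq M_{0}$. I would also fix an arbitrary defining sequence $(P_{i})$ for $C_{2}=k_{2}(C_{2})$.

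Next I would assemble these into a defining sequence for $W$. Since $A\cap\bigl(k_{1}(C_{1})\cup k_{2}(C_{2})\bigr)=\{x_{1},x_{2}\}$, for large $i$ the only components of $N_{i}$ and $P_{i}$ meeting $A$ are the component $N_{i}^{\{x_{1}\}}$ containing $x_{1}$ and the component $P_{i}^{\{x_{2}\}}$ containing $x_{2}$. I would form $\widetilde{Q}_{i}$ by keeping every component disjoint from $A$ unchanged and replacing $N_{i}^{\{x_{1}\}}$ and $P_{i}^{\{x_{2}\}}$ by a regular neighborhood $H_{i}$ of $N_{i}^{\{x_{1}\}}\cup A\cup P_{i}^{\{x_{2}\}}$, choosing the connecting tubes to be nested. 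Setting $Q_{i}=p(\widetilde{Q}_{i})$, I would verify the defining-sequence axioms: each component disjoint from $A$ maps homeomorphically (hence to a cube with handles), while $A$ lies in the interior of $H_{i}$ as a tame, and therefore cellular, arc, so $p(H_{i})=H_{i}/A$ is again a handlebody by the quotient argument already used in Lemma \ref{equivalentembeddinglemma}; nesting and $\bigcap_{i}Q_{i}=W$ follow because $p$ is a closed map that is injective off $A$.

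The crucial point, and the place where the hypothesis $x\neq x_{1}$ enters, is that for all large $i$ the component $N_{i}^{\{x\}}$ is disjoint from $A$. I would argue this by compactness: any accumulation point of a sequence of points in $N_{i}^{\{x\}}\cap A$ would lie in $\bigcap_{i}N_{i}^{\{x\}}\cap A\subseteq C_{1}\cap A=\{x_{1}\}$, which is impossible since $x_{1}\notin N_{i}^{\{x\}}$ once $i$ is large. For such $i$, the map $p$ is a homeomorphism on a neighborhood of $N_{i}^{\{x\}}$, so the component of $Q_{i}$ containing $x^{\prime}=p(x)$ is exactly $p(N_{i}^{\{x\}})$, a cube with handles of genus $g(N_{i}^{\{x\}})=k$. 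Passing to a tail, $(Q_{i})$ becomes a defining sequence for $W$ all of whose $x^{\prime}$-components have genus $k$, whence $g_{\{x^{\prime}\}}(W)\leq k=g_{\{x\}}(C_{1})$.

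I expect the main obstacle to be the handlebody bookkeeping at the wedge: producing $H_{i}$ so that the $\widetilde{Q}_{i}$ nest properly and confirming that collapsing $A$ leaves $p(H_{i})$ a genuine cube with handles. By contrast, once $N_{i}^{\{x\}}$ is known to miss $A$, the genus at $x$ is preserved for free, since $p$ is then a local homeomorphism there and genus is a topological invariant of the component.
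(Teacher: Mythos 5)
Your overall strategy is the same as the paper's: push a defining sequence for $C_{1}$ whose $x$-component has genus $k$ forward through $p$, joining the pieces at the wedge point by a neighborhood of $A$, and observe that the component containing $x^{\prime}$ keeps genus $k$. Your argument that $N_{i}^{\{x\}}$ eventually misses $A$ is also correct. The gap is the assertion that, for large $i$, the \emph{only} components of $N_{i}$ and $P_{i}$ meeting $A$ are the ones containing $x_{1}$ and $x_{2}$, which you justify solely by $A\cap\bigl(k_{1}(C_{1})\cup k_{2}(C_{2})\bigr)=\{x_{1},x_{2}\}$. That fact only shows that for large $i$ any intersection of $N_{i}\cup P_{i}$ with $A$ must occur near $x_{1}$ or $x_{2}$; it does not rule out \emph{other} components touching $A$ there. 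Since $C_{1}\setminus\{x_{1}\}$ accumulates at $x_{1}\in A$, the distance from $C_{1}\setminus\{x_{1}\}$ to $A$ is zero, so even deep in the sequence (where all components are small) a component containing Cantor set points very close to $x_{1}$ can still reach over and touch $A$; one can easily build defining sequences, including ones realizing $g(N_{i}^{\{x\}})=k$, in which this happens at every stage. When it does, your $\widetilde{Q}_{i}$ is not a disjoint union of handlebodies: the offending component meets $N_{i}^{\{x_{1}\}}\cup A\cup P_{i}^{\{x_{2}\}}$ itself, hence meets any regular neighborhood $H_{i}$ of it, and the construction fails to produce a defining sequence for $W$.

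This is exactly the point where the paper does its real work. It replaces $(M_{i})$ by an adjusted sequence $(M^{\prime}_{i})$ in which, by construction, the only component meeting $A$ is the one containing $x_{1}$, via a mixed-stage trick: fix the component $M_{(i,1)}\ni x_{1}$ at stage $n_{i}$; note that $C_{(i,2)}=C_{1}\setminus M_{(i,1)}$ is compact and disjoint from $A$, hence at positive distance $d_{i}$ from $A$; then take all remaining components from a much deeper stage $n_{i+1}$ at which every component has diameter less than $d_{i}/2$, so components meeting $C_{(i,2)}$ cannot reach $A$, while components meeting $M_{(i,1)}$ are absorbed into the interior of $M_{(i,1)}$. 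Your proof needs this adjustment (or an equivalent device, such as merging \emph{all} components that meet $A$ into $H_{i}$, which would then require re-verifying the handlebody, nesting, and intersection conditions); without it, the claim on which your construction rests is unjustified and in general false.
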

\begin{proof}
Again, the result if obvious if $g_{\{x\}}(C_{1})=\infty$. 
Assume $g_{\{x\}}(C_{1})=k\in \mathbb{N}$. By Lemma \ref{localgenuslemma}, there is a defining sequence $(M_{i})$ for $C_{1}$ such that for every $i$, $g(M_{i}^{\{x\}})=k$. By starting the defining sequence at a late enough stage, we may assume that each component of each stage of the defining sequence is in the half space $\{(x,y,z)\vert x<- \frac{1}{2}\}$ in $R^{3}$, and we may assume the component of each $M_{i}$ that contains $x$ is distinct from the component of $M_{i}$ that contains $x_{1}$. Choose a defining sequence $N_{i}$ for $C_{2}$ so that each component of each stage of the defining sequence is in the half space $\{(x,y,z)\vert x> \frac{1}{2}\}$ in $R^{3}$.

We now adjust the defining sequence $(M_{i})$ replacing it by a defining sequence $(M^{\prime}_{i})$ so that the only component of $M^{\prime}_{i}$ that has nonempty intersection with $A$ is the component containing $x_{1}$, and so that for every $i$, $g({M^{\prime}}_{i}^{\{x\}})$ still is $k$.
Let $M_{(1,1)}$ be the component of $M_{1}=M_{n_{1}}$ containing $x_{1}$. Let $C_{(1,2)}$ be the Cantor set $C_{1}\setminus M_{(1,1)}$ and $C_{(1,1)} $ be the Cantor set $C_{1}\cap M_{(1,1)}$.
Let $d_{1}$ be the minimum of:\\
 \centerline{$\left\{d(C_{(1,2)},A),d(C_{(1,2)},M_{(1,1)}),
d(C_{(1,1)},Bd(M_{(1,1)}))\right\}$.} 
Choose a stage $n_{2}$ such that all components of $M_{n_{2}}$ have diameter less than $\dfrac{d_{1}}{2}$. Let $M^{\prime}_{1}$ consist of $M_{(1,1)}$ together with the components of $M_{n_{2}}$ that do not intersect $M_{(1,1)}$.

For the second step, repeat the above procedure on $M_{n_{2}}$, letting $M_{(2,1)}$ be the component of $M_{n_{2}}$ containing $x_{1}$, $C_{(2,2)}$ be the Cantor set $C_{1}\setminus M_{(2,1)}$ and $C_{(2,1)} $ be the Cantor set $C_{1}\cap M_{(2,1)}$.
Let $d_{2}$ be the minimum of:\\
 \centerline{$\left\{d(C_{(2,2)},A),d(C_{(2,2)},M_{(2,1)}),
d(C_{(2,1)},Bd(M_{(2,1)}))\right\}$.} 
Choose a stage $n_{3}$ such that all components of $M_{n_{3}}$ have diameter less than $\dfrac{d_{2}}{2}$. Let $M^{\prime}_{2}$ consist of $M_{(2,1)}$ together with the components of $M_{n_{2}}$ that do not intersect $M_{(2,1)}$.

Continuing inductively produces the desired defining sequence $(M^{\prime}_{i})$. Similarly, construct a defining sequence $(N^{\prime}_{i})$ for $C_{2}$ so that the only component of $N^{\prime}_{i}$ intersecting $A$ is the component containing $x_{2}$. Finally, choose a sequence of regular neighborhoods of $A$, $(P_{i})$, converging to $A$ so that for each $i$,
$W_{(i,1)}=M(i,1)\cup P_{i}\cup N_{(i,1)}$ form a manifold neighborhood of $A$ and converge to $A$.

The defining sequence for $W$ is then produced as follows. $W_{i}$ consists of $p(W_{(i,1)})$ together with $p(C)$ for all components $C$ of $M^{\prime}_{i}$ distinct from $M_{(i,1)}$ and all 
components $C$ of $N^{\prime}_{i}$ distinct from $N_{(i,1)}$. This defining sequence for $W$ has the property that for each $i$, the component of $W_{i}$ containing $x^{\prime}$ has genus $k$. It follows that $k\geq g_{\{x^{\prime}\}}(W)$ as required.

\end{proof}

The previous three lemmas together yield a proof of the following main theorem on genus of points in a wedge. Again, identify $C_{i}$ with $h_{i}(C_{i})$ for ease of notation.

\begin{theorem}
\label{wedgetheorem}
Let $W=(C_{1},x_{1}) \bigvee (C_{2},x_{2})$ be as in Definition \ref{wedgedef}.
Then 
\begin{itemize}
\item If $x\in C_{i}\setminus \{x_{i}\}$, then 
$g_{x}(C_{i})=g_{x}(W)$; and 
\item 
$g_{x_{1}}(W)
=g_{x_{1}}(C_{1})+g_{x_{2}}(C_{2})$.
\end{itemize}
\end{theorem}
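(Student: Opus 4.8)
The plan is to assemble Theorem \ref{wedgetheorem} directly from the three preceding lemmas, which between them pin down the genus at every type of point. The theorem makes two claims: a genus-preservation statement for points away from the wedge point, and an additivity statement at the wedge point itself. The second bullet is immediate: it is precisely the content of the first lemma of this section, which gives $g_{\{x_1=x_2\}}(W)=g_{\{x_1\}}(C_1)+g_{\{x_2\}}(C_2)$, using that the wedge point has a $3$-cell neighborhood meeting both factors in the configuration required by Theorem 13 of \cite{Ze05}. So the real work is the first bullet, and here the strategy is to obtain equality by squeezing between two opposite inequalities.

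First I would treat a point $x\in C_1\setminus\{x_1\}$ (the case $x\in C_2\setminus\{x_2\}$ being identical by the symmetry of the construction). The second lemma of this section gives $g_{\{x\}}(C_1)\le g_{\{x\}}(W)$, obtained by starting from an optimal defining sequence for $W$ and discarding the components that miss $C_1$. The third lemma gives the reverse inequality $g_{\{x\}}(C_1)\ge g_{\{x'\}}(W)$, where $x'=p(x)$; the point to note is that under the identification of Definitions \ref{wedgedef} and \ref{wedgedef2}, the quotient point $x'$ corresponds to $x$ viewed inside the wedge, so these two bounds refer to the genus of the same point of $W$. Combining them forces $g_{\{x\}}(C_1)=g_{\{x\}}(W)$, which is exactly the first bullet.

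The main bookkeeping obstacle is reconciling the two descriptions of the wedge: the second and third lemmas are stated with respect to different defining presentations (Definition \ref{wedgedef} versus Definition \ref{wedgedef2}), so I would explicitly invoke the equivalence established after Lemma \ref{equivalentembeddinglemma} — namely that $(C_1,x_1)\bigvee(C_2,x_2)$ and $(C_1,x_1)\bigvee'(C_2,x_2)$ are equivalently embedded via the quotient map $p$ — to guarantee that local genus, being an embedding invariant, agrees at the corresponding points. Once that identification is made, the two inequalities genuinely bracket a single quantity and the proof is a one-line citation of the three lemmas. I expect no further difficulty beyond stating this matching carefully and noting the symmetry that covers $C_2$.
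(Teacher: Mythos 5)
Your proposal is correct and matches the paper's argument exactly: the paper also obtains the theorem by combining the three preceding lemmas, taking additivity at the wedge point from the first lemma and squeezing the two inequalities from the second and third lemmas (reconciled through the equivalence of the two wedge definitions) to get equality away from the wedge point. Your explicit attention to matching points under the quotient map $p$ is a careful spelling-out of what the paper leaves implicit.
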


\section{Main Result}
\label{ResultSection}
We are now ready to prove the main result, Theorem \ref{maintheorem}. 
\begin{proof}[Proof of Theorem \ref{maintheorem}]
Let $C$ be a Cantor set, and suppose there is some integer  $N\geq 3$ such that there are only finitely many points in $C$ of local genus $N$.

By Theorem \ref{rigidtheorem} and the remark following that theorem, for each  sequence 
$S=(n_1, n_2, \ldots)$
of integers in $S_{1}$, such that $n_i>2$, there exists a wild Cantor set in
$ R^3$, $X_{S}=C(S)$, and a countable dense set
$A=\{a_1, a_2, \ldots\} \subset X$ such that the following
conditions hold.
\begin{enumerate}
\item $g_x(X_{S}) \le 2$ for every $x\in X\setminus A$,
\item $g_{a_i}(X_{S})=n_i$ for every $a_i\in A$ and
\item $R^3\setminus X_{S}$ is simply connected.
\end{enumerate}
The construction in \cite{GaReZe06} yields the fact that the sets\\
 $A_{1}=\{a_{i}\vert i\text{ is odd}\}$ and
$A_{2}=$ $\{a_{i}\vert i\text{ is even}\}$ are also dense in $X(S)$. Choose an increasing sequence of integers $(m_{1},m_{2},\ldots)$ such that $m_{1}\geq 3$ and form a sequence of integers $S=(n_{1},n_{2},\ldots\}$ by specifying $n_{2i}=m_{i}$ and $n_{2i+1}=N$ for each $i$. The construction in 
\cite{GaReZe06} also yields the fact that $X(S)$ is rigidly embedded.

Let $Y_{S}=(C,x_{1}) \bigvee (X_{S},x_{2})$ for some points
$x_{1}\in C$ and $x_{2}\in X_{S}$ as in Definition \ref{wedgedef}.
 Condition (3) above together with Theorem \ref{pionetheorem} imply that
 $\pi_{1}(Y_{S}^{c})$
is isomorphic to $\pi_{1}(C^{c})$.

By Theorem \ref{wedgetheorem},  $Y_{S}$ has countably many points of genus $N$ and $C$ has only finitely many points of genus $N$. So $C$ and $Y_{S}$ are inequivalent Cantor sets.

Next suppose that $S$ and $S^{\prime}$ are formed from distinct increasing sequences of integers  as above. Suppose there were a homeomorphism $h$ of $R^{3}$ taking
$Y_{S}$ to $Y_{S^{\prime}}$. By local genus considerations,  a dense subset of the countable dense set of points in the copy of $X_{S}$ in $Y_{S}$ that have genus $N$ must be taken by $h$ into a dense subset of points of $X_{S^{\prime}}$ in $Y_{S^{\prime}}$ that have genus $N$. 
Also a dense subset of the countable dense set of points in the copy of $X_{S^{\prime}}$ in $Y_{S^{\prime}}$ that have genus $N$ must be taken by $h^{-1}$ into a dense subset of the points of $X_{S}$ in $Y_{S}$ that have genus $N$.  It follows
that $h$ takes the copy of $X_{S}$ in $Y_{S}$ onto the copy of $X_{S^{\prime}}$ in $Y_{S^{\prime}}$. But this contradicts the fact that there is either a genus that occurs among points of $X(S)$ that does not occur among points of $X(S^{\prime})$, or vice versa.

Thus every increasing sequence of integers $\geq 3$ yields a different Cantor set $Y_{S}$. Since there are uncountably many such sequences, there are uncountably many such examples for each Cantor set $C$ as in the theorem. The result now follows.
\end{proof}

\begin{corollary} The Cantor sets 
constructed in the proof of
Theorem \ref{maintheorem}, $Y_{S}=(C,x_{1}) \bigvee^{\prime} (X_{S},x_{2})$, are \emph{not} splittable as $C\cup A$.
\end{corollary}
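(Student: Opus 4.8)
The plan is to argue by contradiction using an embedding invariant built from local genus. Suppose $Y_{S}$ were splittable as $C\cup A$, where $A$ denotes the copy of $X_{S}$; that is, suppose $Y_{S}=C_{1}\cup C_{2}$ with $C_{1}\cap C_{2}=\emptyset$, with $C_{1}$ and $C_{2}$ lying in disjoint tame $3$-cells $D_{1}$ and $D_{2}$, and with $C_{1}$ equivalent to $C$ and $C_{2}$ equivalent to $X_{S}$. Write $w$ for the wedge point $x_{1}=x_{2}$, and write $C^{*}$ and $A^{*}$ for the copies of $C$ and $X_{S}$ sitting inside $Y_{S}$, so that $C^{*}\cap A^{*}=\{w\}$. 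For any Cantor set $Z\subset R^{3}$, let $F(Z)$ be the set of points $p\in Z$ such that every neighborhood of $p$ meets the set of genus-$N$ points of $Z$ in an infinite set. Since local genus is an embedding invariant and the defining condition is purely topological, $F$ is preserved by ambient homeomorphisms.

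First I would compute $F(Y_{S})$ directly from the wedge. By Theorem \ref{wedgetheorem}, every point of $A^{*}\setminus\{w\}$ and every point of $C^{*}\setminus\{w\}$ retains the local genus it had in $X_{S}$ or in $C$. The construction in the proof of Theorem \ref{maintheorem} makes the genus-$N$ points dense in $X_{S}$, so they are dense in $A^{*}$ and accumulate at every point of $A^{*}$, giving $A^{*}\subseteq F(Y_{S})$; since $C$ has only finitely many genus-$N$ points, $C^{*}$ contributes none, and because $A^{*}$ is closed with $A^{*}\cap C^{*}=\{w\}$, no point of $C^{*}\setminus\{w\}$ is an accumulation point of the genus-$N$ set. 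Hence $F(Y_{S})=A^{*}$ exactly. On the other hand, because $C_{1}$ and $C_{2}$ lie in disjoint cells, local genus is computed inside each cell, so $F(Y_{S})=F(C_{1})\cup F(C_{2})$ with the union disjoint; since $C_{1}$ is equivalent to $C$ we get $F(C_{1})=\emptyset$, and since $C_{2}$ is equivalent to $X_{S}$ we get $F(C_{2})=C_{2}$, whence $F(Y_{S})=C_{2}$. Comparing the two computations gives $C_{2}=A^{*}$, and therefore $C_{1}=Y_{S}\setminus A^{*}=C^{*}\setminus\{w\}$. But $C^{*}$ is a perfect set, so $w$ is a limit of $C^{*}\setminus\{w\}$; thus $C^{*}\setminus\{w\}$ is not closed in $Y_{S}$ and hence not compact, contradicting that $C_{1}$ is a Cantor set. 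This contradiction completes the argument.

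The main obstacle is justifying the two structural facts that make $F$ computable: that local genus is genuinely a \emph{local} invariant, so that a point lying in one cell of a split has the same local genus in $Y_{S}$ as in its own piece (this is what yields the disjoint decomposition $F(Y_{S})=F(C_{1})\cup F(C_{2})$ and lets me transport finiteness and density of genus-$N$ points across the equivalences with $C$ and with $X_{S}$), and the precise bookkeeping that the accumulation locus $F(Y_{S})$ equals $A^{*}$ \emph{exactly}, with no stray contribution from the finitely many genus-$N$ points of $C^{*}$. Both rest on Theorem \ref{wedgetheorem} together with the fact, used throughout Section \ref{GenusWedgeSection}, that the local genus at a point is determined by defining sequences supported in arbitrarily small neighborhoods of that point. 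Once these are in hand, the compactness contradiction is immediate.
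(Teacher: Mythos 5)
Your proposal is correct, but it proves a different (and in one sense stronger) reading of the statement than the paper does, and it is correspondingly much heavier. The paper reads ``splittable as $C\cup A$'' literally: one piece of the splitting is the actual copy of $C$ sitting inside $Y_{S}$, so the other piece is forced to be $Y_{S}\setminus C = X_{S}\setminus \{x_{2}\}$; since the wedge point is a limit point of $X_{S}\setminus\{x_{2}\}$ and lies in the copy of $C$, this set is not compact (hence not a Cantor set) and cannot lie in a closed $3$-cell disjoint from one containing $C$ --- a one-line contradiction. You instead rule out \emph{any} splitting into pieces $C_{1}$, $C_{2}$ that are merely equivalent to $C$ and to $X_{S}$, using the accumulation locus $F$ of genus-$N$ points as an embedding invariant to force $C_{2}=A^{*}$ and $C_{1}=C^{*}\setminus\{w\}$, and then you invoke exactly the same non-compactness contradiction. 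What your approach buys is the up-to-equivalence conclusion, which the paper's literal argument does not give; what it costs is the auxiliary fact that for Cantor sets lying in disjoint tame closed cells, local genus computed in the union agrees with local genus computed in each piece. Be careful here: you attribute this to Theorem \ref{wedgetheorem}, but that theorem concerns wedges (pieces sharing a point), not separated unions. The fact you need is true and is actually easier than the wedge case --- one inequality follows by restricting a defining sequence of the union to the components meeting $C_{1}$, exactly as in the second lemma of Section \ref{GenusWedgeSection}, and the other by combining defining sequences for $C_{1}$ and $C_{2}$ at late enough stages that their components are disjoint --- but it should be stated and proved as a lemma rather than cited as a consequence of the wedge theorem. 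With that lemma supplied, your argument is complete; note also that the up-to-equivalence statement genuinely requires prescribing \emph{both} pieces, since for tame $C$ one can split off a clopen subset of the copy of $C$ that is still equivalent to $C$, so weakening the hypothesis on $C_{2}$ would make the claim false.
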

\begin{proof}
If these Cantor sets were splittable, $C$ would be in a $3$-cell $D_{1}$ disjoint from a $3$-cell $D_{2}$ containing $Y_{S}
\setminus C$ = $X_{S}\setminus x_{2}$. This would be a contradiction.

\end{proof}

 \begin{corollary}
\label{rigidcorollary}
There are uncountably many rigid Cantor sets with complement nonsimply connected
that have the same fundamental group of the complement.
\end{corollary}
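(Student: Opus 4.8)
The plan is to exhibit one explicit uncountable family, since the statement only asserts existence. First I would fix a single Cantor set $C$ that is simultaneously rigid, has nonsimply connected complement, and has bounded local genus; such Cantor sets are available in the literature on wild Cantor sets (for instance suitably symmetric Antoine-type necklaces, which are rigid and have finite local genus at every point). Because the local genus of $C$ is bounded, there is an integer $N\ge 3$ larger than this bound, so \emph{no} point of $C$ has local genus $N$. In particular only finitely many (indeed zero) points of $C$ have local genus $N$, and $C$ satisfies the hypothesis of Theorem \ref{maintheorem}.

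Next I would run the construction in the proof of Theorem \ref{maintheorem} verbatim with this $C$: for each increasing sequence $(m_1,m_2,\ldots)$ of integers with $m_1\ge 3$ I form the sequence $S$ with $n_{2i+1}=N$ and $n_{2i}=m_i$, take the rigidly embedded simply-connected-complement Cantor set $X_S=C(S)$ of Theorem \ref{rigidtheorem}, and set $Y_S=(C,x_1)\bigvee(X_S,x_2)$. Since $\pi_1(X_S^{c})$ is trivial, Theorem \ref{pionetheorem} gives $\pi_1(Y_S^{c})\cong\pi_1(C^{c})$, which is nontrivial; hence every $Y_S$ has nonsimply connected complement, and all of them share the single fundamental group $\pi_1(C^{c})$. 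The inequivalence argument in Theorem \ref{maintheorem}, which uses only local genus, shows that distinct sequences yield inequivalent $Y_S$, so the family is uncountable.

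It remains to prove that each $Y_S$ is rigid, which I expect to be the crux. Let $h\colon R^3\to R^3$ be a homeomorphism with $h(Y_S)=Y_S$. Since $h$ preserves local genus (Section \ref{GenusSection}), I would first show it respects the wedge decomposition. By Theorem \ref{wedgetheorem} no point of the copy of $C$ other than the wedge point has local genus $N$ (as $C$ has none), while the genus-$N$ points of the copy of $X_S$ are exactly the densely embedded odd-indexed points $a_{2i+1}$; hence the set of points of $Y_S$ of local genus $N$ has closure equal to the copy of $X_S$. Therefore $h$ carries $X_S$ onto $X_S$, and consequently carries $C=\overline{Y_S\setminus X_S}$ onto $C$ (using that $C$, being perfect, is the closure of $C$ minus the wedge point), and fixes the wedge point $C\cap X_S$. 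Because the sub-Cantor sets $C$ and $X_S$ sit inside $Y_S$ by embeddings equivalent to their standalone embeddings (the Remark after Definition \ref{wedgedef}), I can conjugate $h$ by the ambient homeomorphisms realizing these equivalences and invoke the rigidity of $C$ and of $X_S$ separately, concluding $h|_C=\mathrm{id}$ and $h|_{X_S}=\mathrm{id}$, whence $h|_{Y_S}=\mathrm{id}$.

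The principal difficulty is the first step: guaranteeing a starting $C$ that is at once rigid, of nonsimply connected complement, and of bounded local genus, since the rigid sets supplied by Theorem \ref{rigidtheorem} have simply connected complement and cannot serve directly. Once such a $C$ is secured, the genus bookkeeping of Theorem \ref{wedgetheorem} cleanly separates the two summands of the wedge, and rigidity of the wedge reduces to the rigidity of its pieces together with the established rigidity of $X_S$ used already in the proof of Theorem \ref{maintheorem}.
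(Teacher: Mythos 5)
Your proposal is correct and takes essentially the same route as the paper: the paper's proof likewise takes for $C$ a rigid Antoine Cantor set of local genus $1$ everywhere (citing Wright's construction) and runs it through the proof of Theorem \ref{maintheorem}. The rigidity verification you spell out --- genus-$N$ points are dense in exactly the copy of $X_{S}$, forcing any self-homeomorphism to preserve each wedge summand and fix the wedge point, whence rigidity of the two pieces gives rigidity of $Y_{S}$ --- is precisely the argument the paper leaves implicit in its one-sentence proof.
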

\begin{proof}
It suffices to take for $C$ in the proof of  Theorem \ref{maintheorem} 
any of the rigid Antoine Cantor sets
of local genus 1 everywhere. See \cite{Wright3} for a description of these Cantor sets.
One can also take for $C$ any of the rigid Cantor sets constructed in
\cite{GaReZe06} since they take on certain genera only once. \end{proof}

Theorem \ref{maintheorem} and Corollary \ref{rigidcorollary} answer both questions from the beginning of the paper.

\section{Application to $3$-Manifolds}

If we work in $S^3$ instead of $R^3$, the following lemma is a consequence of results about Freudenthal compactifications and theory of ends. (see and \cite{Freudenthal}, \cite{Dickman}, and
\cite{Siebenmann}) For completeness, we provide a proof based on defining sequences. 

\begin{lemma}\label{endlemma}
Let $C$ and $D$ be Cantor sets (or more generally, any compact $0$-dimensional sets) in $\mathbb{R}^{3}$. Suppose there is a homeomorphism $h:
\mathbb{R}^{3} \setminus C\rightarrow \mathbb{R}^{3}\setminus D$. Then $h$ extends to a homeomorphism $\bar{h}:(\mathbb{R}^{3},C)\rightarrow (\mathbb{R}^{3},D)$. In particular, $C$ is homeomorphic to $D$ and $C$ and $D$ are equivalently embedded.
\end{lemma}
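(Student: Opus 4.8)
The plan is to reconstruct the pair $(\mathbb{R}^{3},C)$ from the end structure of $\mathbb{R}^{3}\setminus C$ and then transport that structure across $h$. First I would fix a defining sequence $(M_{i})$ for $C$, discarding at each stage the components that miss $C$ (this preserves the nesting $M_{i+1}\subset\mathrm{Int}\,M_{i}$), so that the components of the $M_{i}$, nested and intersected, are in bijection with the points of $C$. Choosing round balls $B_{i}$ exhausting $\mathbb{R}^{3}$ and setting $K_{i}=\overline{B_{i}}\setminus\mathrm{Int}\,M_{i}$ gives a compact exhaustion of $X=\mathbb{R}^{3}\setminus C$ whose complementary pieces are one unbounded component together with the sets $\mathrm{Int}(c)\setminus C$ for $c$ a component of $M_{i}$. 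Passing to the inverse limit of the system $\{\pi_{0}(X\setminus K_{i})\}$ identifies the end space of $X$ with $C\sqcup\{e_{\infty}\}$, where $e_{\infty}$ is the end at infinity and each point $x\in C$ is a finite end with neighborhood basis $\{\mathrm{Int}(c_{i})\setminus C\}_{i}$ running along its defining thread $(c_{i})$.

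The hard part will be showing that $h$ carries the end at infinity of $\mathbb{R}^{3}\setminus C$ to the end at infinity of $\mathbb{R}^{3}\setminus D$; once this is known the extension is essentially forced. Here I would use that $h$, being a homeomorphism, is proper and hence induces a homeomorphism $h_{\ast}$ of end spaces. Since $C$ is compact it is bounded, so the complement of a large ball is a connected neighborhood of $e_{\infty}$ containing no finite end, whence $e_{\infty}$ is an isolated point of the end space. Because $C$ is a Cantor set it is perfect, so every finite end is a limit of other finite ends and is therefore non-isolated; thus $e_{\infty}$ is the \emph{unique} isolated end, and the same holds on the $D$ side, forcing $h_{\ast}(e_{\infty})=e_{\infty}$ and hence $h_{\ast}(C)=D$. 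I would flag that this step genuinely uses perfectness: for $C=D=\{\mathbf{0}\}$ the inversion $x\mapsto x/|x|^{2}$ is a self-homeomorphism of the complement that swaps the two ends and does not extend over $\mathbf{0}$, so the end at infinity really must be singled out by a property the Cantor hypothesis guarantees.

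Finally I would build $\bar{h}$. For $x\in C$ with defining thread $(c_{i})$, the images $h(\mathrm{Int}(c_{i})\setminus C)$ form a neighborhood basis of the finite end $h_{\ast}(e_{x})$, so by matching threads of the two defining sequences their closures shrink to the single point $y\in D$ determined by $h_{\ast}(e_{x})$ (no mass escapes to infinity precisely because $h_{\ast}(e_{\infty})=e_{\infty}$); I set $\bar{h}(x)=y$ and $\bar{h}|_{X}=h$. Continuity of $\bar{h}$ at $x$ is then routine: given a neighborhood $V$ of $y$, for large $i$ one has $\overline{h(\mathrm{Int}(c_{i})\setminus C)}\subset V$, and $(\mathrm{Int}(c_{i})\setminus C)\cup\{x\}$ is a neighborhood of $x$ mapped into $V$. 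Running the identical argument for $h^{-1}$ produces a continuous inverse and shows $\bar{h}|_{C}\colon C\to D$ is a bijection, so $\bar{h}\colon(\mathbb{R}^{3},C)\to(\mathbb{R}^{3},D)$ is a homeomorphism of pairs extending $h$; restricting to $C$ gives $C\cong D$, and viewing $\bar{h}$ as a self-homeomorphism of $\mathbb{R}^{3}$ exhibits the equivalence of the embeddings. The only nonroutine ingredient is the identification and preservation of $e_{\infty}$ carried out in the previous paragraph.
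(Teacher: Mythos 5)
Your proposal is correct for genuine Cantor sets, but it takes a genuinely different route from the paper's. The paper argues directly with defining sequences: given a defining sequence $(M_i)$ for $C$, it pushes each handlebody boundary $\mathrm{Bd}(M_{(i,j)})$ --- a compact surface lying in $\mathbb{R}^{3}\setminus C$ --- forward by $h$, takes $N_{(i,j)}$ to be the bounded component of $\mathbb{R}^{3}\setminus h(\mathrm{Bd}(M_{(i,j)}))$, and claims that $(N_i)$ is a defining sequence for $D$ whose nested threads correspond to those of $(M_i)$; this forces the value of $\bar h$ at each point of $C$, and applying the same construction to $h^{-1}$ gives surjectivity and the inverse. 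Your end-theoretic argument is precisely the alternative the authors allude to in the sentence before the lemma (the $S^{3}$ version ``is a consequence of results about Freudenthal compactifications and theory of ends''), which they deliberately avoid in favor of a self-contained defining-sequence proof. The comparison is instructive in both directions. What your route buys: it isolates exactly where perfectness is used, namely in showing $h_{\ast}(e_{\infty})=e_{\infty}$ via the uniqueness of the isolated end, and your inversion example $x\mapsto x/|x|^{2}$ with $C=D=\{\mathbf{0}\}$ shows that the extension claim is actually \emph{false} for general compact $0$-dimensional sets, so the paper's parenthetical ``(or more generally, any compact $0$-dimensional sets)'' overstates the extension statement; the same defect is implicit in the paper's own proof, since in that example the sets $N_i$ are increasing balls rather than a nested decreasing family, and the claim that $(N_i)$ is a defining sequence for $D$ fails. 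What the paper's route buys: it stays entirely within the defining-sequence machinery already developed in Section 3 and needs no end-space formalism. One detail you should still write out in your final step: the fact that $\bigcap_i \overline{h(\mathrm{Int}(c_i)\setminus C)}=\{y\}$ is not automatic from ``neighborhood basis of the end''; it follows because each closure is compact and connected, the open sets $h(\mathrm{Int}(c_i)\setminus C)$ themselves have empty intersection (their preimages shrink to $\{x\}\subset C$), and hence the nested intersection is a compact connected subset of the $0$-dimensional set $D$, so a single point --- this is what legitimizes both the definition of $\bar h(x)$ and the continuity estimate.
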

\begin{proof}
Let $(M_{i})$ be a defining sequence for $C$. Suppose 
\\$M_i=\{M_{(i,1)}, M_{(i,2)},\ldots M_{(i,n(i))}\}$. 
Let $N_{(i,j)}$ be the bounded component of $\mathbb{R}^{3}\setminus h(Bd(M_{(i,j)}))$, and let $N_i=
\{N_{(i,1)}, N_{(i,2)},\ldots N_{(i,n(i))}\}$. The claim is that $(N_{i})$ is a defining sequence for $D$
so that the nested sequence in $M_{i}$ associated with a point $c\in C$ corresponds to a nested sequence in $N_{i}$ corresponding to a point $d\in D$. This forces $\bar{h} ( c ) $ to be defined to be $d$. Now $\bar{h}$ defined in this way is continuous and 1-1 because of the definition of defining sequences.  The fact that a similar construction can be done using $h^{-1}$ shows that $\bar{h}$ takes $C$ onto $D$. 

It is clear that any nested sequence in $(N_{i})$ has intersection compact, connected and in $D$, so consists of a single point of $D$. This establishes the claim at the beginning of the preceding paragraph and completes the proof.
\end{proof}

We now provide the proof of Theorem \ref{endtheorem}.
\begin{proof}
The $3$-manifolds are the complements in $R^{3}$ of the Cantor sets constructed in the proof of Theorem \ref{maintheorem} in Section \ref{ResultSection}. These are all nonhomeomorphic by
Lemma \ref{endlemma}.
\end{proof}

See \cite{Kister-McMillan} and  \cite{McMillan} for earlier results on nonhomeomorphic $3$-manifolds with the same fundamental group.

\section{Related Questions}
\label{QuestionSection}

The techniques in this paper lead to a number of open questions.

\begin{question}
Are there examples of inequivalent Cantor sets with the same fundamental group
of the complement for which the genus of all Cantor sets involved is bounded? The constructions described in this paper yield wedges that have points or arbitrarily large genus.
\end{question}

\begin{question}
Given a Cantor set that does not meet the criteria of Theorem \ref{maintheorem},
are there inequivalent Cantor sets with the same fundamental group of the complement?
Note that a Cantor set not covered by these results would have an infinite number of points
of every genus greater than or equal to three.
\end{question}
\begin{question}
Do the results of Theorem \ref{endtheorem} remain true if the restriction on local genus of points in the Cantor set is removed?
\end{question}
 \begin{question}
Does Theorem \ref{maintheorem}
remain true if the Cantor set $C$ has only a finite number of points of genus $1$, or of genus $2$, or only a finite number of points of genus $\infty$?
\end{question}
\begin{question}
Can the construction in \cite{GaReZe06} be modified to produce specific points in a countable dense subset that have infinite genus rather than certain specified finite genera?
\end{question}

A Cantor set $C$ is said to be \emph{strongly homogeneously embedded} in $R^{3}$ if every self homeomorphism 
of $C$ extends to a self homeomorphism of $R^{3}$. Define the \emph{embedding homogeneity group}
of the Cantor set to be the group of self homeomorphisms that extend to homeomorphisms of $R^{3}$. Rigid Cantor sets have trivial embedding homogeneity group.

\begin{question}
Given a finitely generated abelian group $G$, is there a Cantor set $C$ in $R^{3}$
with embedding homogeneity group $G$?
\end{question}

\begin{question}
Given a finite abelian group $G$, is there a Cantor set $C$ in $R^{3}$
with embedding homogeneity group $G$?
\end{question}

\begin{question}
What kinds of groups arise as embedding homogeneity groups of Cantor sets?
\end{question}

\section{Acknowledgements}
 The authors were supported in part by the Slovenian Research Agency grants
 P1-0292-0101, J1-2057-0101 and BI-US/11-12-023.
 The first author was supported in part by the National Science Foundation grants 
 DMS 0852030 and DMS 1005906.

\end{document}